\setlist[enumerate,1]{label = (\alph*),ref = (\alph*)}
\tikzset{every node/.style={draw, circle, inner sep=2pt},
every picture/.append style={thick,scale=0.8},
every label/.style={draw=none, rectangle}}
\newtheorem{proposition}{Proposition}[section]
\newtheorem{theorem}[proposition]{Theorem}
\newtheorem{corollary}[proposition]{Corollary}
\newtheorem{lemma}[proposition]{Lemma}
\theoremstyle{definition}
\newtheorem{example}[proposition]{Example}
\newtheorem{remark}[proposition]{Remark}
\newcommand{\npmatrix}[1]{\left( \begin{matrix} #1 \end{matrix} \right)}
\newcommand{\bigabs}[1]{\left|\,\strut#1 \,\right|}
\newcommand{\R}{\mathbb{R}}
\newcommand{\bR}{\mathbb{R}}
\newcommand{\bN}{\mathbb{N}}
\newcommand{\mc}{\mathcal}
\DeclareMathOperator{\diag}{diag}
\DeclareMathOperator{\diam}{diam}
\newcommand{\1}{{\bf 1}}
\newcommand{\0}{{\bf 0}}
\newcommand{\e}{{\bf e}}
\newcommand{\ww}{{\bf w}}
\newcommand{\xx}{{\bf x}}
\newcommand{\yy}{{\bf y}}
\let\epsilon\varepsilon
\newcommand{\tuple}{\mathbf}
\newcommand{\m}{\tuple{m}}
\newcommand{\vv}{\tuple{v}}
\newcommand{\vu}{\tuple{u}}
\newcommand{\trans}{^\top}
\newcommand{\norm}[1]{\ensuremath{\left\|{#1}\right\|}}
\newcommand{\El}{\mathcal{E}_\Lambda}
    \definecolor{helena}{rgb}{.2,.8,.4}
    \definecolor{polona}{rgb}{.2,.2,.8}
    \definecolor{rupert}{rgb}{0,.5,.5}
   \definecolor{todo}{rgb}{.8,.2,.2}
   \def\MR#1{}
\begin{document}
\title[Paths are  generically realisable]
{Paths are  generically realisable}
\author{Rupert H. Levene, Polona Oblak, Helena \v Smigoc}
\address[R.~H.~Levene and H.~\v Smigoc]{School of Mathematics and Statistics, University College Dublin, Belfield, Dublin 4, Ireland}
\email{rupert.levene@ucd.ie}\email{helena.smigoc@ucd.ie}
\address[P.~Oblak]{Faculty of Computer and Information Science, University of Ljubljana, Ve\v cna pot 113, SI-1000 Ljubljana, Slovenia}
\email{polona.oblak@fri.uni-lj.si}

\subjclass[2010]{15B57, 15A18, 15B10,  05C50}
 \keywords{Symmetric matrix; Join of graphs; Inverse eigenvalue problem; Minimal number of distinct eigenvalues}
\bigskip

\maketitle 

\begin{abstract}
 We show that every $0$-$1$ multiplicity matrix for a simple graph $G$ is generically realisable for $G$. In particular, every multiplicity matrix for a path is generically realisable. We use this result to provide several families of joins of graphs that are realisable by a matrix with only two distinct eigenvalues.
\end{abstract}

\section{Introduction}
Let $G$ be a simple graph with vertex set $V(G)=\{1,\dots,n\}$ and edge set~$E(G)$, and consider $S(G)$, the set of all real symmetric $n \times n$ matrices $A=\npmatrix{a(i,j)}$ such that, for $i \neq j$, $a(i,j) \neq 0$ if and only if $\{i,j\} \in E(G)$, with no restriction on the diagonal entries of $A$. The \emph{inverse eigenvalue problem for graphs (IEP-G)}
seeks to characterise all possible spectra $\sigma(A)=\{\lambda_1,\ldots,\lambda_{n}\}$ of matrices $A\in S(G)$. The IEP-G is solved for a few families of graphs (complete graphs, paths, generalized stars~\cite{MR2022294}, cycles~\cite{MR583498,MR2549049}, generalized barbell graphs~\cite{lin2020inverse}) and for graphs of order at most five \cite{MR4074182}; the problem remains open in many other cases. 

The closely related  \emph{ordered multiplicity inverse eigenvalue problem for graphs} seeks to characterise all possible ordered multiplicities of eigenvalues of matrices in $S(G)$, i.e.,~to characterise the ordered lists of nonnegative integers $(m_1,\ldots,m_r)$ for which there exists a matrix $A \in S(G)$ and $\lambda_1<\ldots<\lambda_r$, such that $\sigma(A)=\{\lambda_1^{(m_1)},\ldots,\lambda_r^{(m_r)}\}$, where  $\lambda_i^{(m_i)}$ denotes $m_i$ copies of $\lambda_i$. We call such an ordered list $(m_1,\ldots,m_r)$ an \emph{ordered multiplicity vector of $G$}. The ordered multiplicity inverse eigenvalue problem has been resolved for all connected graphs of order at most six \cite{2017arxiv170802438}. For some specific families of connected graphs, several ordered multiplicity vectors have been determined (see e.g.~\cite{MR4044603,MR4074182,lin2020inverse}).
Moreover, Monfared and Shader proved the following theorem in~\cite{MR3506498}, showing that $(1,1,\ldots,1)$ is an ordered multiplicity vector of any connected graph $G$, which can be realised with a nowhere-zero eigenbasis, that is, a basis of eigenvectors, each containing no zero entry.

\begin{theorem}
\label{thm:nowhere0eigenbasis}
{\rm \cite[Theorem~4.3]{MR3506498}}
For any connected graph $G$ on $n$ vertices and distinct real numbers $\lambda_1,\ldots,\lambda_n$, there exists $A \in S(G)$ with spectrum $\sigma(A)=\{\lambda_1,\ldots,\lambda_n\}$ and an eigenbasis consisting of nowhere-zero vectors.
\end{theorem}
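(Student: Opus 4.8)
The natural approach is induction on the number of vertices $n$, adding one vertex at a time so as to keep the graph connected and using a bordered-matrix (arrowhead-type) computation to control the new row and column. For $n=1$, $A=[\lambda_1]$ with eigenvector $[1]$ works. For the inductive step, pick a vertex $v$ of $G$ that is not a cut vertex, so that $G':=G-v$ is connected; relabel so $v=n$, and let $N\subseteq\{1,\dots,n-1\}$ be the nonempty neighbourhood of $v$. I would look for $A$ of the form
\[
A=\npmatrix{A' & b\\ b\trans & c},\qquad A'\in S(G'),\ \ c\in\R,\ \ b\in\R^{n-1},\ \ \mathrm{supp}(b)=N,
\]
which automatically lies in $S(G)$.

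To pin down the spectrum, choose reals $\mu_1<\dots<\mu_{n-1}$ strictly interlacing $\lambda_1<\dots<\lambda_n$, apply the inductive hypothesis to $G'$ with eigenvalues $\mu_i$ to get $A'\in S(G')$ with $\sigma(A')=\{\mu_1,\dots,\mu_{n-1}\}$ and an orthonormal, nowhere-zero eigenbasis, and let $U$ be the orthogonal matrix of those eigenvectors, so $A'=U\diag(\mu_1,\dots,\mu_{n-1})U\trans$ with no zero entry. Writing the border in eigencoordinates as $\tilde b=U\trans b$, the classical secular-equation analysis of bordered matrices shows that $\sigma(A)=\{\lambda_1,\dots,\lambda_n\}$ forces
\[
c=\sum_{k=1}^n\lambda_k-\sum_{i=1}^{n-1}\mu_i\qand
\tilde b_i^2=-\frac{\prod_{k=1}^n(\mu_i-\lambda_k)}{\prod_{j\ne i}(\mu_i-\mu_j)}\quad(1\le i\le n-1),
\]
and that, conversely, any $b$ with these coordinate magnitudes produces the right spectrum; the right-hand side above is positive precisely because the interlacing is strict. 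So $c$ and each $|\tilde b_i|$ are determined, and the only freedom left is in the signs of the $\tilde b_i$ and in the choice of $A'$ (within its isospectral class in $S(G')$) and of the $\mu_i$.

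Two points remain. The first, checking that $A$ has a nowhere-zero eigenbasis, is straightforward: since the $\mu_i$ strictly interlace the $\lambda_k$, no $\lambda_k$ lies in $\sigma(A')$, so the $\lambda_k$-eigenvector of $A$ is $\bigl(-(A'-\lambda_k I)^{-1}b,\,1\bigr)$, whose last coordinate is $1$ and whose $\ell$-th coordinate ($\ell\le n-1$) equals $-\sum_i U_{\ell i}\tilde b_i/(\mu_i-\lambda_k)$, and these expressions are nonzero for a generic choice of the interlacing values. The second point is the genuine obstacle: arranging that $b=U\tilde b$ has support exactly $N$. If $A'$, hence $U$, is fixed first, this imposes the $n-1-|N|$ linear conditions $\sum_i U_{\ell i}\tilde b_i=0$ ($\ell\notin N$), to be met by one of the finitely many sign choices for $(\tilde b_i)$ --- hopeless in general --- so the inductive hypothesis cannot be used as a black box. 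The fix is to choose $A'$ and the $\mu_i$ together with a target border vector: fix $b^\ast$ with $\mathrm{supp}(b^\ast)=N$ and nonzero entries on $N$, and realise $G'$ with eigenvalues $\mu_i$ by a matrix $A'$ whose eigenvector matrix $U$ additionally satisfies $|(U\trans b^\ast)_i|=|\tilde b_i|$ for all $i$, exploiting the positive-dimensional freedom in the isospectral set $\{A'\in S(G'):\sigma(A')=\{\mu_i\}\}$ and the freedom in the $\mu_i$. Making this last step rigorous means finding and proving the right strengthened inductive statement --- one that delivers, together with a realisation of $G'$, exactly the control over the eigenvectors needed to solve for the border vector --- and that is the heart of the argument, carried out in \cite{MR3506498}.
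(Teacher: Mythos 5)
There is a genuine gap, and you have named it yourself: the step of arranging that $b=U\tilde b$ has support exactly $N$ is never carried out, and your proposal ends by deferring ``the heart of the argument'' to \cite{MR3506498}. As written, the induction does not close: the inductive hypothesis only gives you \emph{some} nowhere-zero eigenbasis for $G'$, which fixes $U$ up to finitely many choices once $A'$ is fixed, and then the $n-1-|N|$ vanishing conditions $\sum_i U_{\ell i}\tilde b_i=0$ for $\ell\notin N$ (together with the prescribed magnitudes $|\tilde b_i|$) are generically unsolvable, exactly as you observe. The proposed fix --- a ``strengthened inductive statement'' giving simultaneous control of the spectrum of $A'$ and of the quantities $U\trans b^\ast$ --- is precisely the missing content, and asserting that it can be done by ``exploiting the positive-dimensional freedom'' in the isospectral set is not a proof. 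A secondary soft spot is the claim that the eigenvectors $\bigl(-(A'-\lambda_k I)^{-1}b,\,1\bigr)$ are nowhere-zero ``for a generic choice of the interlacing values'': since $U$, $\tilde b$ and the $\mu_i$ are all entangled through the construction, genericity in the $\mu_i$ alone needs an actual argument.

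For comparison: the paper you were given does not prove this statement at all --- it is imported verbatim as \cite[Theorem~4.3]{MR3506498}. In Monfared and Shader's proof the difficulty you ran into is handled differently: they solve a $\lambda$--$\mu$ structured inverse eigenvalue problem (prescribing both $\sigma(A)$ and $\sigma(A(v))$) by the Jacobian/implicit function theorem method, perturbing from an initial solution whose border already has the required zero--nonzero pattern, rather than fixing $A'$ first and trying to meet the support conditions afterwards. So your bordered-matrix formulas (the trace identity for $c$ and $\tilde b_i^2=-\prod_k(\mu_i-\lambda_k)/\prod_{j\ne i}(\mu_i-\mu_j)$, positive by strict interlacing) are correct and are indeed part of the standard toolkit here, but the analytic mechanism that makes the support and nonvanishing conditions achievable is exactly the part your sketch leaves out.
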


The present work draws motivation from the above result, in the context of multiplicity matrices of disconnected graphs. Such multiplicity matrices were introduced in~\cite{levene2020orthogonal} as a generalisation of ordered multiplicity vectors of connected graphs, and are defined as follows. %
Let $G$ be a graph with $k$ connected components $G_1,\dots,G_k$, and $r,k \in \bN$. An $r\times k$ matrix $V$ with non-negative integer entries
is said to be a \emph{multiplicity matrix} for $G$ if for $1\le i\le k$, the $i$th column of $V$ %
is an ordered multiplicity list realised by a matrix in $S(G_i)$. Note that a trivial necessary condition for $V$ to be a multiplicity matrix of $G$ is that the orders of the connected components of $G$ are the same as the column sums of $V$; we abbreviate this by saying that $V$ \emph{fits} $G$. 

 A matrix is said to be \emph{nowhere-zero} if each of its entries is nonzero. 
Let $\sigma=\{\lambda_1,\ldots,\lambda_n\}$ be a multiset of real numbers, and let $D \in \bR^{n \times n}$ be a diagonal matrix with diagonal elements $\lambda_i$ (in some order). For a connected graph~$G$ we say that a multiset $\sigma$ is \emph{realisable for $G$} if $\sigma=\sigma(A)$ for some $A\in S(G)$; we say $\sigma$ is \emph{generically realisable for $G$} if, moreover, for any finite set $\mc Y\subseteq \bR^n\setminus\{\0\}$ there is an orthogonal matrix $U$ so that $UD U\trans\in S(G)$, where $D$ is a diagonal matrix with spectrum $\sigma$ and $U\yy$ is nowhere-zero for all $\yy \in \mc Y$. (Note that this condition is stronger than the realisability of $\sigma$ in $S(G)$ with a nowhere-zero eigenbasis: consider $\mc Y=\{\e_1,\dots,\e_n\}$.)
An ordered multiplicity vector $\m =(m_1,\ldots,m_r)\in \bN^r$  is \emph{spectrally arbitrary} for~$G$ if for any real numbers $\lambda_1<\dots<\lambda_r$, the multiset $\sigma=\{\lambda_1^{(m_1)},\dots,\lambda_r^{(m_r)}\}$ is realisable for $G$. Further, we say $\m$ is  \emph{generically realisable for $G$} if it is spectrally arbitrary for $G$, and every assignment of eigenvalues results in a generically realisable multiset for $G$, as defined above. If every  column of  a multiplicity matrix  $V$ for a graph $G$ is generically realisable for the  corresponding connected component of $G$, then we say that $V$ is \emph{generically realisable} for $G$.

The methods used in~\cite{MR3665573,MR4074182} to resolve the IEP-G and the ordered multiplicity IEP-G for small graphs make essential use of strong properties. In particular, a symmetric $n \times n$ matrix $A$ is said to have the \emph{strong spectral property} (the \emph{SSP}) if the zero matrix $X=0$ is the only symmetric matrix satisfying $AX=XA$ and $A\circ X=I_n \circ X=0$, where $\circ$ denotes the Hadamard product. The SSP was first defined in \cite{MR3665573}. One of its key features is the following perturbation result.

\begin{theorem}
\label{thm:ssp}
{\rm \cite[Theorem~10]{MR3665573}}
Let $G'$ be a spanning subgraph of a graph $G$. If $A'\in S(G')$ is a matrix with the SSP, then for any $\epsilon>0$ there is a matrix $A\in S(G)$ with the SSP such that $A$ and $A'$ have the same eigenvalues and $\|A-A'\|<\epsilon$.
\end{theorem}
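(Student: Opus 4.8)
The plan is to use the geometric meaning of the SSP. Write $\mathrm{Sym}_n$ for the space of real symmetric $n\times n$ matrices with the Frobenius inner product, and for a graph $H$ on $n$ vertices let $\mathcal L_H=\{X\in\mathrm{Sym}_n: X_{ij}=0 \text{ whenever } i\neq j,\ \{i,j\}\notin E(H)\}$ and $\mathcal O_B=\{QBQ\trans:Q\in O(n)\}$ for $B\in\mathrm{Sym}_n$. The orbit $\mathcal O_B$ is an embedded submanifold with $T_B\mathcal O_B=\{KB-BK:K\trans=-K\}$, whose orthogonal complement in $\mathrm{Sym}_n$ is the commutant $\{X\in\mathrm{Sym}_n: BX=XB\}$; moreover, when $B\in S(H)$ one checks that $\mathcal L_H^\perp=\{X\in\mathrm{Sym}_n: B\circ X=I\circ X=0\}$. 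Hence the SSP of $B\in S(H)$ is exactly the statement $(T_B\mathcal O_B+\mathcal L_H)^\perp=\{0\}$, i.e.\ the transversality $T_B\mathcal O_B+\mathcal L_H=\mathrm{Sym}_n$.

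First I would reduce to the one-edge case: list $E(G)\setminus E(G')$, add its edges one at a time, and apply that case repeatedly with perturbation at most $\epsilon/|E(G)\setminus E(G')|$ at each step — legitimate because each intermediate matrix again has the SSP. So assume $E(G)=E(G')\cup\{\{p,q\}\}$ and set $\mathcal O=\mathcal O_{A'}$. Since $\mathcal L_{G'}\subseteq\mathcal L_G$, the SSP of $A'$ yields $T_{A'}\mathcal O+\mathcal L_G=\mathrm{Sym}_n$ as well; thus the orthogonal projection $\pi\colon\mathrm{Sym}_n\to\mathcal L_G^\perp$ restricts to a submersion of $\mathcal O$ near $A'$, so near $A'$ the set $\mathcal M:=\mathcal O\cap\mathcal L_G=(\pi|_{\mathcal O})^{-1}(0)$ is an embedded submanifold of dimension $\dim\mathcal O-\dim\mathcal L_G^\perp$. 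Running the same argument with $G'$ in place of $G$, the set $\mathcal O\cap\mathcal L_{G'}$ is, near $A'$, an embedded submanifold of dimension $\dim\mathcal O-\dim\mathcal L_{G'}^\perp=\dim\mathcal M-1$, since passing from $G'$ to $G$ increases $\dim\mathcal L_H$ by exactly one.

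The crux is to produce $A\in\mathcal M$ arbitrarily close to $A'$ with $A_{pq}\neq0$. Observe that $\{B\in\mathcal M:B_{pq}=0\}=\mathcal O\cap\mathcal L_{G'}$; if the coordinate function $B\mapsto B_{pq}$ vanished on a whole neighbourhood of $A'$ in $\mathcal M$, that neighbourhood would be contained in $\mathcal O\cap\mathcal L_{G'}$, which is impossible since a nonempty open subset of the manifold $\mathcal M$ cannot lie in a manifold of strictly smaller dimension (invariance of domain). So the desired $A$ exists, and for $A$ close enough to $A'$ its off-diagonal support is exactly $E(G)$: the entries on $E(G')$ remain nonzero by continuity, those outside $E(G)$ vanish because $A\in\mathcal L_G$, and $A_{pq}\neq0$. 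Also $A$ lies on $\mathcal O$, so $\sigma(A)=\sigma(A')$. Finally $A$ has the SSP: $T_A\mathcal O$ depends continuously on $A\in\mathcal O$ and ``$W+\mathcal L_G=\mathrm{Sym}_n$'' is an open condition on linear subspaces $W$, so $T_A\mathcal O+\mathcal L_G=\mathrm{Sym}_n$ continues to hold near $A'$, which is the SSP for $A\in S(G)$.

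I expect the main obstacle to be setting up the geometry carefully — verifying the equivalence ``SSP $=$ transversality'', that $\mathcal O$ is an embedded submanifold with continuously varying tangent spaces, and the clean submersion/dimension count — after which the one-edge reduction, the invariance-of-domain step, and the openness of the SSP are routine.
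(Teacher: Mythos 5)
Your proposal is correct, but note that the paper does not prove this statement at all: Theorem~\ref{thm:ssp} is quoted verbatim from \cite[Theorem~10]{MR3665573}, so the only fair comparison is with the argument in that source (whose transversality framework the present paper reuses in Lemma~\ref{lem:SSPness}). Your proof lives in exactly that framework: the identification of the SSP of $B\in S(H)$ with the transversality $T_B\mathcal O_B+\mathcal L_H=\mathrm{Sym}_n$ is the same geometric reformulation used on page~11 of \cite{MR3665573} (your two orthogonal-complement computations are correct), and restricting the projection onto $\mathcal L_G^\perp$ to the isospectral orbit is the same device as their implicit-function-theorem machinery. Where you genuinely diverge is in how the new off-diagonal entry is made nonzero: the source parameterises a family of manifolds in which the entries on the added edges are pinned to a small nonzero value $t$ and invokes their persistence-of-transversal-intersection theorem (the result quoted here as \cite[Theorem~3]{MR3665573}), whereas you fix the target graph, show $\mathcal O\cap\mathcal L_G$ and $\mathcal O\cap\mathcal L_{G'}$ are embedded manifolds near $A'$ of dimensions $d$ and $d-1$, and conclude by a dimension count that points with $A_{pq}\neq 0$ accumulate at $A'$. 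That step is sound (and could be phrased even more simply via $T_{A'}\mathcal M\subseteq T_{A'}(\mathcal O\cap\mathcal L_{G'})$ forcing $d\le d-1$, rather than invariance of domain); the only small gaps worth flagging are that you should say a word about why $B\mapsto T_B\mathcal O$ is continuous along the orbit (the rank of $K\mapsto KB-BK$ is constant on the orbit, so its image varies continuously), which underpins both the submersion claim near $A'$ and the openness of the SSP at the end, and that $d\ge1$ is automatic since the SSP of $A'$ already forces $\dim\mathcal O\ge\dim\mathcal L_{G'}^\perp$. Your route buys a self-contained proof that avoids invoking the parameterised-family theorem, at the cost of redoing the standard differential-geometric bookkeeping; the source's route packages that bookkeeping once and reuses it for the SAP and SMP variants as well.
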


We call a matrix a \emph{$0$-$1$ matrix} if each of its entries is either $0$ or $1$. Our main result is Theorem~\ref{01GR} below, in which we apply the SSP to show that every $0$-$1$ multplicity matrix which fits a graph $G$
is generically realisable for $G$. In particular, this implies that every multiplicity matrix which fits a path $P_n$ is generically realisable for $P_n$. In the terminology of~\cite{levene2020orthogonal}, we say $P_n$ is \emph{generically realisable}.

We provide some applications in Section 3. In particular, we are interested in finding examples of joins of graphs that allow a small number of distinct eigenvalues. %
The minimum number of distinct eigenvalues of a graph
$$q(G) = \min\{q(A)\colon A \in S(G)\},$$
where $q(A)$ denotes the number of distinct eigenvalues of a square matrix~$A$, is one of the parameters motivated by IEP-G. The study of $q(G)$
was initiated by Leal-Duarte and Johnson in \cite{MR1899084}, and it has been broadly studied since then (see e.g.,~\cite{corrigendum,MR2735867,MR3665573,MR3904092,MR3118943,MR3891770,MR3506498,levene2020orthogonal}).
Monfared and Shader \cite[Theorem~5.2]{MR3506498} proved that $q(G \vee H)=2$ if $G$ and $H$ are connected graphs with $|G|=|H|$.
A consequence of our result is the following generalisation (Theorem~\ref{thm:diff=2} below): if $G$ and $H$ are arbitrary graphs, each with $k$ connected components $G_1,\dots,G_k$ and $H_1,\dots,H_k$, so that $\bigabs{|G_i|-|H_i|}\le 2$ for each $i$, then we still have $q(G\vee H)\le 2$.

\bigskip
In this paper we use the following notation. For an integer $n$, let us denote $[n]=\{1,2,\ldots,n\}$, and we also write $k+[n]:=\{k+1,k+2,\dots,k+n\}$. %
Column vectors are typically written using boldface; for example, $\1_n$ denotes the column vector of ones in $\bR^{n}$, and $\e_i:=\npmatrix{0,\dots,0,1,0,\dots,0}\trans \in \bR^n$ is the vector with the $1$ in the $i$th entry and zeros elsewhere. The $k\times k$ identity matrix is  $I_k$ is the $k \times k$ identity matrix, $0_{m \times n}$ is the zero $m \times n$ matrix, and we also write $0_m:=0_{m\times m}$ and $\0_m:=0_{m\times 1}$. Where the context allows, we may omit these subscripts altogether.

 For a vector $\xx\in \bR^n$ let us denote by $\xx(i)\in \bR^{n-1}$ the vector $\xx$ with its $i$th component removed, and for a matrix $A$ let $A(i)$ denote its principal submatrix with the $i$th row and column of $A$ removed.

 All graphs $G=(V(G),E(G))$ considered are simple undirected graphs with non-empty vertex sets $V(G)$. The \emph{order} of~$G$ is 
 $|G|=|V(G)|$ and we often assume (without loss of generality) that $V(G)= [\;|V(G)|\;]$.
 For a connected graph $G$, the \emph{distance} between a pair of vertices is the number of edges of the shortest path between them, and the \emph{diameter} $\diam(G)$ is the largest distance between any pair of vertices. A subgraph~$H$ is a \emph{spanning subgraph} of $G$ if
$V(H)=V(G)$.  The \emph{join} $G \vee H$ of two graphs $G$ and $H$ is the disjoint  union $G \cup H$ together with all the possible edges joining the vertices in $G$ to the vertices in $H$. We abbreviate the disjoint graph union of $k$ copies of the same graph $G$ by $kG:=G\cup\dots\cup G$. %
For  a subgraph $H$ of $G$ and a matrix $A\in S(G)$, we define a matrix $A[H]$ as the principal submatrix of $A$ whose rows and columns are the vertices of a subgraph $H$. 
We write $P_n$, $C_n$ and $K_n$ for the \emph{path}, the \emph{cycle} and the \emph{complete graph} on $n$ vertices, respectively,
and we denote the \emph{complete bipartite graph} on two disjoint sets of cardinalities $m$ and $n$ by $K_{m,n}:=mK_1 \vee nK_1$.

\section{Generic realisability of $0$-$1$ matrices}

In this section we will prove that any $0$-$1$ multiplicity matrix that fits a graph $G$ is generically realisable for $G$.

Throughout this section, we fix $m\in \bN$, real numbers $\lambda_1 < \lambda_2<\dots <\lambda_m$, and the diagonal matrix $\Lambda:=\diag(\lambda_1,\ldots,\lambda_m)$. Further, 
let $\El$ denote the smooth manifold of all $m \times m$ symmetric matrices with eigenvalues $\lambda_1, \dots,\lambda_m$.
This manifold was studied in \cite{MR3665573}, where a special case of the following lemma was proven (for the case the $f(\{i,j\})=1$ for all $i,j$). In fact, a nearly identical argument yields the more general result we require. We give the details for completeness.

\begin{lemma}\label{lem:SSPness}
  Let $G$ be a connected graph of order~$m$.
  Given a function $f: E(G)\rightarrow \mathbb{N}$ and $t\in \bR$, consider the family of manifolds given by
\begin{equation}\label{eq:MfG}
 \mc M_{f,G}(t):=\{A=\npmatrix{a(i,j)} \in S(G): a(i,j)=t^{f(\{i,j\})} \text{ for } \{i,j\} \in E(G)\}.
\end{equation}
For every $\epsilon >0$, there exists $t_0>0$ and a matrix $A \in \mc M_{f,G}(t_0)\cap\El$ with the SSP %
so that $\norm{A-\Lambda}<\epsilon$. 
\end{lemma}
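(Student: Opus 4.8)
The plan is to build the desired matrix by a two-step argument: first produce a matrix with the prescribed spectrum and the prescribed edge entries $t^{f(\{i,j\})}$ for some small $t$, and then perturb it to have the SSP while keeping the edge entries and the spectrum controlled. The natural starting point is to take $t=0$: then $\mc M_{f,G}(0)$ consists of diagonal matrices (all off-diagonal entries vanish), and $\mc M_{f,G}(0)\cap\El$ contains $\Lambda$ itself. The key observation, exactly as in \cite{MR3665573}, is that $\mc M_{f,G}(t)$ is a smooth manifold depending smoothly on $t$, and so is the intersection with the manifold $\El$; one needs to verify that these manifolds intersect transversally at $\Lambda$ when $t=0$, so that the intersection persists and varies smoothly for small $t>0$. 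Concretely, I would parametrise a neighbourhood of $\Lambda$ in $\El$ by $e^{K}\Lambda e^{-K}$ for skew-symmetric $K$ with small norm, and check that the map sending $(t,K)$ to the off-diagonal entries "$a(i,j)-t^{f(\{i,j\})}$" has surjective derivative in the $K$ directions at $(0,0)$ — this is the standard computation that the tangent space of $\El$ at $\Lambda$ surjects onto the off-diagonal coordinates, which holds because $\lambda_1<\dots<\lambda_m$ are distinct. The implicit function theorem then yields, for each sufficiently small $t>0$, a matrix $A_t\in\mc M_{f,G}(t)\cap\El$ with $A_t\to\Lambda$ as $t\to 0^+$, giving the norm bound $\norm{A_t-\Lambda}<\epsilon$ for $t$ small.

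The remaining issue is the SSP. Here I would invoke the fact, established in \cite{MR3665573} for the all-ones case and requiring only the identical argument in general, that having the SSP is a \emph{generic} condition on the manifold $\mc M_{f,G}(t)\cap\El$ — more precisely, for each fixed small $t>0$ the set of matrices in $\mc M_{f,G}(t)\cap\El$ \emph{without} the SSP is a proper subvariety (it is cut out by the vanishing of certain polynomials expressing the existence of a nonzero symmetric $X$ commuting with $A$ and supported off the diagonal and off $E(G)$), so its complement is open and dense provided it is nonempty. To see it is nonempty one can either exhibit a single SSP matrix in the family directly, or — cleaner — argue by continuity/analyticity in $t$: the "SSP locus" is either all of the family or a proper subvariety for every $t$ in an interval, and at a carefully chosen $t$ (or by a limiting argument using Theorem~\ref{thm:ssp} with $G'$ a spanning tree) one checks nonemptiness. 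Combining, for each small $t>0$ we may perturb $A_t$ within $\mc M_{f,G}(t)\cap\El$ to a matrix $A$ with the SSP; choosing the perturbation smaller than $\epsilon-\norm{A_t-\Lambda}$ and then shrinking $t$ if necessary yields $A\in\mc M_{f,G}(t_0)\cap\El$ with the SSP and $\norm{A-\Lambda}<\epsilon$, as required.

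I expect the main obstacle to be the transversality/nonemptiness bookkeeping rather than any single hard idea: one must be careful that $\mc M_{f,G}(t)$ is genuinely a manifold of the expected dimension (the constraint fixes the $|E(G)|$ edge entries, leaving the $m$ diagonal entries free), that it meets $\El$ transversally for small $t$ so the intersection has the right dimension and the SSP-genericity argument of \cite{MR3665573} applies verbatim, and that the family $\mc M_{f,G}(t)\cap\El$ is nonempty and SSP-nonempty for a sequence of $t\to 0^+$. The exponents $f(\{i,j\})\in\bN$ play essentially no role beyond ensuring $t^{f(\{i,j\})}\ne 0$ for $t\ne 0$ and $\to 0$ as $t\to 0$, which is why the argument of \cite{MR3665573} adapts with only cosmetic changes; I would simply remark this and refer the reader there for the parts of the computation that are unchanged, spelling out only the derivative calculation that shows transversality at $t=0$.
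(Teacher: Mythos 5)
Your first step --- checking that $\mc M_{f,G}(0)$ (the diagonal matrices) meets $\El$ transversally at $\Lambda$ and then invoking a persistence/implicit-function argument to obtain, for small $t>0$, a matrix $A_t\in\mc M_{f,G}(t)\cap\El$ with $A_t\to\Lambda$ --- is essentially the paper's argument (the paper quotes \cite[Theorem~3]{MR3665573} for the persistence). The genuine gap is in your treatment of the SSP. You propose to perturb $A_t$ \emph{within} $\mc M_{f,G}(t)\cap\El$ using a genericity argument, but a dimension count rules this out: $\mc M_{f,G}(t)$ has dimension $m$ (only the diagonal entries are free), $\El$ has dimension $m(m-1)/2$ (the eigenvalues are distinct), and the ambient space of symmetric matrices has dimension $m(m+1)/2$, so a transversal intersection $\mc M_{f,G}(t)\cap\El$ is zero-dimensional and there is no room to perturb inside it; ``open and dense'' in a discrete set gives nothing. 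Your fallback suggestions for nonemptiness of the SSP locus also fail for a concrete reason: Theorem~\ref{thm:ssp} produces an SSP matrix in $S(G)$ near a given one but gives no control over the off-diagonal entries, so the perturbed matrix will in general not lie in $\mc M_{f,G}(t)$ --- and the prescribed entries $t^{f(e)}$ are precisely the point of the lemma (they drive the decay rates in Lemma~\ref{lem:functions}), so they cannot be sacrificed.

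What is missing is the observation that no genericity step is needed at all: the SSP of $A=F(t_0)$ follows directly from the transversality you have already established. By the characterisation used in the paper (see \cite[page~11]{MR3665573}), $A$ has the SSP if and only if $S(G)$ and $\El$ intersect transversally at $A$, i.e.\ $\mc N_{S(G).A}\cap\mc N_{\El.A}=\{0\}$. Since $\mc M_{f,G}(t_0)\subseteq S(G)$, the normal spaces satisfy $\mc N_{S(G).A}\subseteq\mc N_{\mc M_{f,G}(t_0).A}$, and the transversality of $\mc M_{f,G}(t_0)$ and $\El$ at $A$ gives $\mc N_{\mc M_{f,G}(t_0).A}\cap\mc N_{\El.A}=\{0\}$; hence $\mc N_{S(G).A}\cap\mc N_{\El.A}=\{0\}$ and $A$ has the SSP. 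This one-line inclusion argument is the paper's conclusion and replaces your entire second step.
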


\begin{proof}
  In this proof we use definitions and notation from  \cite{MR3665573}. In particular, $\mc N_{\mc M.X}$ denotes the normal space to a smooth submanifold $\mc M$ of the $m\times m$ matrices at some $X\in \mc M$, and two such manifolds $\mc M$ and $\mc M'$ intersect transversally at $X\in \mc M\cap \mc M'$ if $\mc N_{\mc M.X}\cap \mc N_{\mc M'.X}=\{0\}$.

  Let $\mc M(t):=\mc M_{f,G}(t)$ be the smooth family of manifolds of $m \times m$ symmetric matrices defined in \eqref{eq:MfG}. Note that $\mc M(0)$ is the set of diagonal matrices.  
 By \cite{MR3665573} we have $\mc N_{\El.\Lambda}=%
\{\alpha I_m: \alpha \in \R\}$ and $\mc N_{\mc M(0).\Lambda}=\{X: X \circ I_m=0 \}$. Since these two normal spaces have trivial intersection, the manifolds $\mc M(0)$ and $\El$ intersect transversally at $\Lambda$.

By  \cite[Theorem~3]{MR3665573} there exists $r>0$ and a continuous function $F:(-r,r)\to \El$ such  that $F(0)=\Lambda$ and for $t\in (-r,r)$, the manifolds $\El$ and $\mc M(t)$ intersect transversally at $F(t)$. Hence,  for any $\epsilon>0$, for sufficiently small $t_0> 0$, the matrix $A:=F(t_0)$ has $\|A-\Lambda\|<\epsilon$ and $A\in\mc M(t_0)\cap\El$.
To see that $A$ has the SSP, we have to prove that $S(G)$ and $\El$ intersect transversally at~$A$ (see \cite[page~11]{MR3665573}).
 Since $\mc M(t_0)\subseteq S(G)$, it follows that $\mc N_{S(G).A}\subseteq \mc N_{\mc M(t_0).A}$. Further, since $\El$ and $\mc M(t_0)$ intersect transversally, we have $\mc N_{S(G).A}\cap \mc N_{\El.A}\subseteq \mc N_{\mc M(t_0).A}\cap \mc N_{\El.A}=\{0\}$, proving that $S(G)$ and $\El$ intersect transversally at~$A$.
 \end{proof}

\begin{lemma}\label{lem:distinct}
  For $n\in \bN$, let $A_n \in \El$ and let $U_n$ be an orthogonal matrix with non-negative diagonal entries satisfying $U_n\trans A_nU_n=\Lambda$.
  If $A_n\to \Lambda$ as $n\to \infty$,  then $U_n\to I_n$ as $n\to \infty$.
\end{lemma}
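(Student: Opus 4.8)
The plan is to exploit the compactness of the orthogonal group $O(m)$ together with the fact that $\Lambda$ has simple spectrum. (I note in passing that the conclusion should read $U_n\to I_m$, since all the matrices involved are $m\times m$.) First I would record the geometric meaning of the hypothesis: since the eigenvalues $\lambda_1<\dots<\lambda_m$ of each $A_n$ are distinct, the equation $U_n\trans A_nU_n=\Lambda$ says precisely that the $i$th column of $U_n$ is a unit eigenvector of $A_n$ for $\lambda_i$. Such a $U_n$ exists and is unique up to the signs of its columns, and the non-negative-diagonal hypothesis is a normalisation selecting one such sign per column (a choice which is uniquely forced once $A_n$ is close enough to $\Lambda$ that the relevant diagonal entries are nonzero).

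The core of the argument is a subsequence extraction. Suppose, towards a contradiction, that $U_n\not\to I_m$; then there is $\epsilon>0$ and a subsequence with $\norm{U_{n_k}-I_m}\ge\epsilon$ for all $k$. By compactness of $O(m)$, pass to a further subsequence along which $U_{n_k}\to U\in O(m)$, where necessarily $U\ne I_m$. Letting $k\to\infty$ in $U_{n_k}\trans A_{n_k}U_{n_k}=\Lambda$ and using $A_{n_k}\to\Lambda$ together with the continuity of matrix multiplication gives $U\trans\Lambda U=\Lambda$, i.e.\ $\Lambda U=U\Lambda$. Comparing entries, $(\lambda_i-\lambda_j)(U)_{ij}=0$ for all $i,j$, so $U$ is diagonal because the $\lambda_i$ are distinct; an orthogonal diagonal matrix is a signature matrix, and since each $(U)_{ii}=\lim_k (U_{n_k})_{ii}\ge 0$ we must have $(U)_{ii}=1$ for every $i$. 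Thus $U=I_m$, the desired contradiction. Equivalently, and perhaps more cleanly, one can run this computation to show that every subsequence of $(U_n)$ has a further subsequence converging to $I_m$, which already forces $U_n\to I_m$.

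I do not anticipate a genuine obstacle: the only two ingredients are the compactness of $O(m)$ and the elementary linear-algebra fact that a matrix commuting with a diagonal matrix of distinct entries is itself diagonal. The single point meriting a sentence of care is the passage to the limit $U\trans\Lambda U=\Lambda$, which is valid because $A_{n_k}\to\Lambda$ and matrix multiplication is continuous, so nothing beyond convergence of the chosen subsequence of $(U_{n_k})$ is needed.
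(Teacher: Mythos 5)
Your proof is correct, and it takes a genuinely different route from the paper. You argue by compactness of the orthogonal group: extract a convergent subsequence $U_{n_k}\to U$, pass to the limit in $U_{n_k}\trans A_{n_k}U_{n_k}=\Lambda$ to get $\Lambda U=U\Lambda$, conclude that $U$ is diagonal because the $\lambda_i$ are distinct, hence a signature matrix, and then the sign normalisation forces $U=I_m$; the subsequence principle finishes the argument. The paper instead argues directly, without any subsequence extraction: for each $k$ it takes a Lagrange-type polynomial $p_k$ with $p_k(\lambda_\ell)=\delta_{k,\ell}$, notes that $p_k(A_n)=(U_ne_k)(U_ne_k)\trans$ is the spectral projection onto the $\lambda_k$-eigenspace, and uses continuity of $A\mapsto p_k(A)$ together with the non-negativity of $(U_ne_k)_k$ to show each column $U_ne_k\to e_k$. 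Both arguments hinge on simplicity of the spectrum — yours through the fact that only diagonal matrices commute with $\Lambda$, the paper's through the existence of the separating polynomials $p_k$. Your soft compactness argument is arguably shorter and needs no construction at all, while the paper's projection argument is constructive and identifies the limit of each column explicitly, which fits the quantitative refinement pursued immediately afterwards in Lemma~2.3. Your parenthetical remarks (uniqueness of $U_n$ up to column signs, and the sign choice being forced for $A_n$ near $\Lambda$) are not needed for the argument and can be dropped; and your observation that the conclusion should read $U_n\to I_m$ rather than $I_n$ is a fair catch of a typo in the statement.
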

\begin{proof}
  For each $k\in [m]$, let $p_k$ be a polynomial with $p_k(\lambda_\ell)=\delta_{k,\ell}$ for $\ell\in [m]$. Then $p_k(A_n)$ is the orthogonal projection onto the $\lambda_k$-eigenspace of $A_n$. Hence \[p_k(A_n)=p_k(U_n\Lambda U_n\trans)=U_np_k(\Lambda)U_n\trans=(U_ne_k)(U_ne_k)\trans\to p_k(\Lambda)=e_ke_k\trans.\] In particular, $(U_ne_k)_k(U_ne_k)_{\ell}=p_k(A_n)_{k,\ell}\to p_k(\Lambda)_{k,\ell}=\delta_{k,\ell}$. Since $(U_ne_k)_k\ge 0$, this implies that $(U_ne_k)_k\to 1$, which implies in turn that $(U_ne_k)_\ell\to 0$ if $\ell\ne k$. Hence, $U_ne_k\to e_k$, proving $U_n\to I_n$.  
\end{proof}

In the previous lemma, we showed that the off-diagonal elements of $U_n$ decay to zero. We now show that when we also have $A_n\in \mc M_{f,G}(t_n)$ where $t_n\to 0$, it is sometimes possible to precisely determine the rate of decay of off-diagonal elements of $U_n$.

\begin{lemma}\label{lem:functions}
  Let $G$ be a tree of order $m$. Given
$g:E(G)\rightarrow \mathbb{N}$, let $N_0\in \bN$ with $N_0>\max\{g(e): e \in E(G)\}\diam(G)$ and let $f:E(G)\to \bN$, $f(e):=N_0+g(e)$.
  
  For $i,j\in V(G)$, let $P(i,j)$ be the subgraph of $G$ consisting of the shortest path from $i$ to $j$, $c(i,j):=\prod_{k\in V(P(i,j))\setminus\{j\}} (\lambda_j-\lambda_k)^{-1}$, and
 $s(i,j):=\sum_{e \in E(P(i,j))}f(e)$. (In particular, $c(i,j)\ne 0$, $c(j,j):=1$ and  $s(j,j):=0$.)

Suppose $t_n>0$ with $t_n\to 0$, and $A_n \in  \mc M_{f,G}(t_n) \cap \El$ with $A_n\to \Lambda$ as $n\to\infty$. For $n\in \bN$, let $U_n=(u_n(i,j))$ be an  orthogonal matrix with non-negative diagonal entries so that $U_n\trans A_nU_n=\Lambda$. Then $ \frac{u_n(i,j)}{t^{s(i,j)}}\to c(i,j)$ as $n\to \infty$, for all $i,j\in V(G)$.%
\end{lemma}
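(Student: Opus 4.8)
The plan is to study the $\lambda_j$-eigenvector of $A_n$ and to read off how fast its entries decay as one moves away from $j$ in the tree, proceeding by induction on the distance to $j$. Since $A_n\in\El$ has simple spectrum, $\lambda_j$ is a simple eigenvalue of $A_n$ with eigenspace spanned by $U_ne_j$, and $u_n(j,j)=(U_ne_j)_j\to 1$ by Lemma~\ref{lem:distinct}; so for all large $n$ I can form $\vv=\vv^{(n)}:=U_ne_j/u_n(j,j)$, the unique $\lambda_j$-eigenvector of $A_n$ whose $j$th entry is $1$, with $i$th entry $v(i)=u_n(i,j)/u_n(j,j)$. Because $u_n(j,j)\to 1$, it suffices to prove that $v(i)/t_n^{s(i,j)}\to c(i,j)$ for every $i\in V(G)$. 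Root $G$ at $j$, and for $i\ne j$ write $p(i)$ for the parent of $i$, so that the neighbours of $i$ in $G$ are exactly $p(i)$ together with the children of $i$. Taking the $i$th coordinate of $A_n\vv=\lambda_j\vv$ gives, for $i\ne j$,
\begin{equation}\label{eq:recursion-plan}
 v(i)=\frac{-1}{a_n(i,i)-\lambda_j}\Bigl(t_n^{f(\{i,p(i)\})}v(p(i))+\sum_{k\text{ a child of }i}t_n^{f(\{i,k\})}v(k)\Bigr),
\end{equation}
where $a_n(i,i)-\lambda_j\to\lambda_i-\lambda_j\ne 0$ because $A_n\to\Lambda$. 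I will use repeatedly that when $k$ is a child of $i$, the path $P(k,j)$ is $P(i,j)$ with the edge $\{i,k\}$ prepended, so $s(k,j)=f(\{i,k\})+s(i,j)$ and $c(k,j)=(\lambda_j-\lambda_k)^{-1}c(i,j)$.

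The chief obstacle is that \eqref{eq:recursion-plan} relates $v(i)$ to the values $v(k)$ at vertices \emph{deeper} in the tree, so one cannot simply induct outward from $j$; the key step is therefore an a priori bound $|v(i)|\le C' t_n^{s(i,j)}$ with $C'$ independent of $n$. To prove it I would fix $i\ne j$, let $S$ be the vertex set of the subtree of $G$ rooted at $i$, and restrict the eigenvector equation to the rows in $S$. Writing $A_n[S]$ for the principal submatrix of $A_n$ on the rows and columns in $S$ and $\vv|_S$ for the corresponding subvector of $\vv$, and using that every neighbour of a vertex of $S\setminus\{i\}$ again lies in $S$ while $p(i)$ is the only neighbour of $i$ outside $S$, one finds that all coordinates of $(A_n[S]-\lambda_jI)\vv|_S$ vanish except the one indexed by $i$, which equals $-t_n^{f(\{i,p(i)\})}v(p(i))$. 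Now $A_n[S]\to\diag(\lambda_w:w\in S)$, and $\lambda_j$ is not among these diagonal entries because $j\notin S$ and the $\lambda_\ell$ are distinct; hence $A_n[S]-\lambda_jI$ is invertible with uniformly bounded inverse for all large $n$, giving $|v(i)|\le\norm{\vv|_S}\le C\,t_n^{f(\{i,p(i)\})}|v(p(i))|$ for a constant $C$ depending only on $S$. Taking $C$ to be the largest such constant over the finitely many subtrees that occur (over all choices of root $j$) and iterating this inequality along the path from $i$ down to $j$, where $v(j)=1$, yields the bound $|v(i)|\le C' t_n^{s(i,j)}$ with $C':=\max(1,C)^{\diam(G)}$.

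With the a priori bound available I would finish by induction on $d=\dist(i,j)$. The base case $d=0$ is $v(j)=1=c(j,j)$ with $s(j,j)=0$. For $d\ge 1$, assume $v(p(i))/t_n^{s(p(i),j)}\to c(p(i),j)$. In \eqref{eq:recursion-plan} the parent term equals $t_n^{s(i,j)}\bigl(c(p(i),j)/(\lambda_j-\lambda_i)+o(1)\bigr)=t_n^{s(i,j)}\bigl(c(i,j)+o(1)\bigr)$, using $s(i,j)=f(\{i,p(i)\})+s(p(i),j)$ and $c(i,j)=(\lambda_j-\lambda_i)^{-1}c(p(i),j)$, while each child term is bounded, via the a priori bound and $s(k,j)=f(\{i,k\})+s(i,j)$, by $C' t_n^{f(\{i,k\})+s(k,j)}=C' t_n^{2f(\{i,k\})}\,t_n^{s(i,j)}=o(t_n^{s(i,j)})$ since $f(\{i,k\})\ge 1$ and $t_n\to 0^+$. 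Hence $v(i)/t_n^{s(i,j)}\to c(i,j)$, completing the induction. (The argument uses only $f\ge 1$, not the full strength of the hypothesis on $N_0$.) The step I expect to require the most care is the a priori bound: recognising that restricting the eigenvector equation to a rooted subtree produces an invertible linear system driven solely by the parent's contribution.
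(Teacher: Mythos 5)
Your proof is correct, but the way you control the ``deeper'' vertices differs from the paper's argument in its key technical step. The paper runs a two-part induction on the distance $x$ to $j_0$: part~(a) gives only the crude bound $u_n(i,j_0)=o(t_n^{xN_0})$ for vertices at distance greater than $x$, and it is precisely the hypothesis $N_0>\max\{g(e)\}\diam(G)$ (through the estimate $s(i,j_0)-f(\{i,k\})<(x_0+1)N_0$) that makes this crude bound strong enough to kill the contributions of the neighbours lying further from $j_0$ in part~(b). You instead obtain a sharp a priori bound $|v(i)|\le C' t_n^{s(i,j)}$ by restricting the eigenvector equation to the subtree $S$ below $i$, observing that $(A_n[S]-\lambda_j I)\vv|_S$ is supported on the coordinate $i$ with value $-t_n^{f(\{i,p(i)\})}v(p(i))$, and using that $A_n[S]-\lambda_jI$ has uniformly bounded inverse because $\lambda_j$ stays away from the limiting diagonal entries $\{\lambda_w:w\in S\}$; iterating along the path to $j$ and feeding the bound back into the recursion makes each child term $O(t_n^{2f(\{i,k\})})t_n^{s(i,j)}=o(t_n^{s(i,j)})$. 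This buys a genuinely stronger statement: your argument only needs $f\ge 1$, so the conclusion holds for an arbitrary $f\colon E(G)\to\bN$, whereas the paper's proof genuinely uses the special form $f=N_0+g$ with $N_0$ large. (That extra generality is not needed downstream—Theorem~\ref{thm:genericU} only requires some $f$ making $s$ injective on unordered pairs—but your resolvent-on-a-subtree bound is a cleaner and more robust way to handle the back-coupling from descendants, at the mild cost of the normalisation step $\vv=U_n\e_j/u_n(j,j)$ and the finitely-many-subtrees uniformity argument, both of which you handle correctly.) The remaining structure—eigenvector recursion at each vertex, induction on distance, and the identities $s(i,j)=f(\{i,p(i)\})+s(p(i),j)$, $c(i,j)=(\lambda_j-\lambda_i)^{-1}c(p(i),j)$—matches the paper.
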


\begin{proof}
  Assuming $V(G)=[m]$ and $A_n=\npmatrix{a_n(i,j)} \in \mc M_{f,G}(t_n)$, it follows that $a_n(i,j)=t^{f(\{i,j\})}$ for each $\{i,j\}\in E(G)$. Since $0<t_n\to 0$ and $A_n\to \Lambda$, we may assume (by taking $n$ sufficiently large) that $0<t_n<1$ and for $i\ne j$, $a_n(i,i)- \lambda_j$ is bounded away from zero.

  Fix $j_0\in \bN$ and consider the vector $U_ne_{j_0}=\npmatrix{u_n(i,j_0)}_{i\in [m]}$. This is a normalised eigenvector of $A_n$ with eigenvalue $\lambda_{j_0}$. Equivalently, for $i\in [m]$ we have 
\begin{equation}\label{eq:term}
(\lambda_{j_0}-a_n(i,i)) u_n(i,j_0)= \sum_{k \in N_G(i)} t_n^{f(\{i,k\})}u_n(k,j_0)%
\end{equation}
where $N_G(i)$ is the set  of neighbours of $i$ in $G$.

For $i\in V(G)$, let $d(i,j_0):=| E(P(i,j_0))|\ge0$ denote the distance in $G$ from $i$ to $j_0$. We claim that for $0\le x\le \diam(G)$:
\begin{enumerate}
\item\label{a} if $i\in V(G)$ with $d(i,j_0)>x$, then $\frac{u_n(i,j_0)}{t_n^{x N_0}}\to 0$ as $n\to \infty$; and
\item\label{b} if $i\in V(G)$ with $d(i,j_0)=x$, then $\frac{u_n(i,j_0)}{t_n^{s(i,j_0)}}\to c(i,j_0)$ as $n\to \infty$.\end{enumerate}
We will establish this claim by induction on $x$.

Since $A_n$ converges to $\Lambda$, the $j_0$-th column of $U_n$ converges to $e_{j_0}$ by Lemma~\ref{lem:distinct}. %
This implies that the claim holds for $x=0$.

Now assume inductively that the claim holds for all $x$ with $0 \leq x\leq x_0<\diam(G)$. We proceed to prove that it holds for $x=x_0+1$ as well.
First consider claim~\ref{a}, and suppose %
$i\in V(G)$ with $d(i,j_0)>x_0+1$. %
Rearranging \eqref{eq:term}, we obtain:
$$ \frac{u_n(i,j_0)}{t_n^{(x_0+1)N_0}}=\frac1{\lambda_{j_0}-a_n(i,i)} \sum_{k \in N_G(i)}\frac{u_n(k,j_0)}{t_n^{(x_0+1)N_0-f(\{i,k\})}}.$$
For any $e\in E(G)$, we have $f(e)> N_0$, so $(x_0+1)N_0-f(e)<x_0N_0$, and: 
\begin{align*} \left|\frac{u_n(i,j_0)}{t_n^{(x_0+1)N_0}}\right|&\le \frac1{|\lambda_{j_0}-a_n(i,i)|} \sum_{k \in N_G(i)}\left|\frac{u_n(k,j_0)}{t_n^{(x_0+1)N_0-f(\{i,k\})}}\right|\\
                                                               &\le \frac1{|\lambda_{j_0}-a_n(i,i)|} \sum_{k \in N_G(i)}\left|\frac{u_n(k,j_0)}{t_n^{x_0N_0}}\right|.
\end{align*}
Since $d(i,j_0)>x_0+1$, we have $d(k,j_0)>x_0$ for all $k\in N_G(i)$, hence by part~(a) of the claim for $x=x_0$, each term in the sum above converges to $0$. Since we also have $\lambda_{j_0}-a_n(i,i)\to \lambda_{j_0}-\lambda_{i}\ne 0$,
part \ref{a} of the claim holds for $x=x_0+1$. This proves  \ref{a} for all $x$ with $0\le x\le \diam(G)$.

Now consider part \ref{b}. Suppose $d(i,j_0)=x_0+1$. Since $G$ is a tree, there exists precisely one $k_0 \in N_G(i)$ with $d(k_0,j_0)=x_0$, and by the definition of $s$, we have $s(i,j_0)=f(\{i,k_0\})+s(k_0,j_0)$. We have $i\ne j_0$, so $\lambda_{j_0}-a_n(i,i)\ne 0$ and we can rearrange~\eqref{eq:term}as follows:
\begin{equation}\label{eq:divide}
 \frac{u_n(i,j_0)}{t_n^{s(i,j_0)}}=\frac1{\lambda_{j_0}-a_n(i,i)}\left( \frac{u_n(k_0,j_0)}{t_n^{s(k_0,j_0)}}
+\sum_{k \in N_G(i)\setminus \{k_0\} }\frac{u_n(k,j_0)}{t_n^{s(i,j_0)-f(\{i,k\})}}\right).
\end{equation}
For every $k \in N_G(i)\setminus \{k_0\}$ we have:
\begin{align*}
 s(i,j_0)-f(\{i,k\})&=\left(\sum_{e \in E(P(i,j_0))}f(e)\right)-f(\{i,k\})\\
 &=(x_0+1)N_0+\left(\sum_{e \in E(P(i,j_0))}g(e)\right)-N_0-g(\{i,k\}) \\
 &< x_0N_0+(x_0+1)\max\{g(e): e \in E(G)\} \\ &< (x_0+1) N_0.
\end{align*}
Hence, by part~\ref{a} of the claim, all the terms under the sum in \eqref{eq:divide} converge to zero. Taking limits and using part~\ref{b} of the claim shows that $\frac{u_n(i,j_0)}{t_n^{s(i,j_0)}}\to (\lambda_{j_0}-\lambda_{i})^{-1}c(k_0,j_0)=c(i,j_0)$. 
\end{proof}

\begin{theorem}\label{thm:genericU}
  Let $G$ be a connected graph of order~$m$,
  and choose a finite set $\mc Y \subset \R^m \setminus \{\0\}$. There exists a matrix $A \in S(G)\cap \El$ with the SSP and an orthogonal matrix $U$ with $U\trans AU=\Lambda$ so that $U\vv$ is nowhere-zero for all $\vv \in \mc Y$.  
\end{theorem}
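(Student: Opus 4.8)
The plan is to reduce the general connected graph $G$ to a spanning tree and then exploit the rate-of-decay information in Lemma~\ref{lem:functions}. First I would fix a spanning tree $T$ of $G$. For each $\vv\in\mc Y$, the set of orthogonal matrices $U$ for which $U\vv$ has a zero entry is a proper algebraic subvariety of the orthogonal group (it is cut out by the nonzero polynomials $\langle U\e_i,\vv\rangle=0$, each of which is not identically zero on $O(m)$ since $\vv\neq\0$), hence so is the finite union over all $\vv\in\mc Y$ and all coordinates $i\in[m]$. So it suffices to produce \emph{one} pair $(A,U)$ with $A\in S(G)\cap\El$ having the SSP, $U\trans AU=\Lambda$, and $U$ avoiding this variety; equivalently, it suffices to show that the pairs $(A,U)$ arising from SSP matrices $A\in S(G)\cap\El$ are not all trapped in the bad variety. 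The idea is that the tree family $\mc M_{f,T}(t)$ produces, as $t\to 0$, orthogonal matrices $U_n$ whose $(i,j)$ entry behaves like $c(i,j)\,t^{s(i,j)}$, and by choosing $g$ (hence the exponents $s(i,j)$) cleverly we can force $U_n\vv$ to be nowhere-zero for large $n$.

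Here is the core argument. Apply Lemma~\ref{lem:SSPness} with the tree $T$ in place of $G$ and with the weight function $f=N_0+g$ from Lemma~\ref{lem:functions}: for each $\epsilon_n=1/n$ we get $t_n>0$ and $A_n\in\mc M_{f,T}(t_n)\cap\El$ with the SSP and $\norm{A_n-\Lambda}<1/n$. Choosing the subsequence so that $t_n\to 0$ (we may, since $t_0\to 0$ as $\epsilon\to 0$ in Lemma~\ref{lem:SSPness} — or simply pass to a subsequence), Lemma~\ref{lem:functions} applies and gives $u_n(i,j)/t_n^{s(i,j)}\to c(i,j)\neq 0$. Now fix $\vv=(v_1,\dots,v_m)\in\mc Y$ and a coordinate $i\in[m]$; then
\[
(U_n\vv)_i=\sum_{j=1}^m u_n(i,j)\,v_j.
\]
Let $S_i:=\min\{s(i,j): j\in[m],\ v_j\neq 0\}$ and let $J_i:=\{j: v_j\neq 0,\ s(i,j)=S_i\}$. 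Dividing by $t_n^{S_i}$, every term with $j\notin J_i$ has a strictly positive power of $t_n$ remaining and hence tends to $0$, while $t_n^{-S_i}(U_n\vv)_i\to \sum_{j\in J_i} c(i,j)v_j$. The remaining task is to choose $g$ so that for every $\vv\in\mc Y$ and every $i$, this limit $\sum_{j\in J_i}c(i,j)v_j$ is nonzero; then $(U_n\vv)_i\neq 0$ for all large $n$, and taking $n$ large enough to work simultaneously for the finitely many pairs $(\vv,i)$ gives the desired $A:=A_n$, $U:=U_n$ (finally extending $A$ from $S(T)$ to $S(G)$ is not needed — we work directly with $G$ once we note $\mc M_{f,T}(t)\subseteq S(G)$ and rerun Lemma~\ref{lem:SSPness}'s transversality with $S(G)$, giving $A_n\in S(G)$ with the SSP while Lemma~\ref{lem:functions}'s tree-recursion~\eqref{eq:term} still holds because only the tree edges carry the $t^{f}$ entries; the extra edges of $G$ contribute entries that likewise must be controlled — see the obstacle below).

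The main obstacle, and the place the argument needs real care, is the \textbf{genericity of the exponents}: we must pick $g$ so that for each relevant $i$ the minimising index set $J_i$ is a \emph{singleton} $\{j_i\}$, because then $\sum_{j\in J_i}c(i,j)v_j=c(i,j_i)v_{j_i}\neq 0$ automatically (both factors are nonzero). This is a statement about the integer-valued function $(i,j)\mapsto s(i,j)=\sum_{e\in E(P(i,j))}(N_0+g(e))=N_0\,d(i,j)+\sum_{e\in E(P(i,j))}g(e)$: for a fixed $i$ we need the values $\{s(i,j): v_j\neq 0\}$ to have a unique minimum. Since $\mc Y$ is finite we only constrain finitely many such minima, and one shows that a suitably generic choice of $g:E(T)\to\bN$ (for instance assigning the edges of $T$ weights that are rapidly growing powers of a fixed integer, so that distinct edge-subsets of the tree give distinct $g$-sums) makes all the relevant partial sums pairwise distinct; this forces each $J_i$ to be a singleton. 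One then verifies $N_0>\max_e g(e)\cdot\diam(T)$ still holds by first choosing $g$ and then $N_0$, as in the hypothesis of Lemma~\ref{lem:functions}. The complication introduced by the non-tree edges of $G$ is handled the same way: their matrix entries are not pinned to $t^{f(e)}$, but since $A_n\to\Lambda$ those entries tend to $0$, and a refinement of the decay estimate (the non-tree edges only help the off-diagonal entries decay \emph{faster}) shows the asymptotics $u_n(i,j)\sim c(i,j)t_n^{s(i,j)}$ computed along $T$ are unaffected to leading order — this is the one routine-but-delicate estimate I would expect to spell out in full.
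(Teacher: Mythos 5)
Your argument for the tree case is essentially sound and close to the paper's: pin the tree edges to $t^{f(e)}$ with $f=N_0+g$, get SSP matrices $A_n\in\mc M_{f,T}(t_n)\cap\El$ from Lemma~\ref{lem:SSPness}, and use the decay rates $u_n(i,j)\sim c(i,j)t_n^{s(i,j)}$ from Lemma~\ref{lem:functions} together with a choice of $g$ (distinct subset sums, plus $N_0$ large) making the relevant exponents $s(i,\cdot)$ distinct, so that each coordinate of $U_n\vv$ has a single dominant term and is eventually nonzero. The paper runs this as a contradiction/subsequence argument with the injectivity condition $s(i,j)=s(k,\ell)\iff\{i,j\}=\{k,\ell\}$, but your direct version of the tree step is fine (the opening ``bad subvariety of $O(m)$'' remark is unused and unnecessary).

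The genuine gap is in your passage from the spanning tree $T$ to the general connected graph $G$. First, the claim $\mc M_{f,T}(t)\subseteq S(G)$ is false: a matrix in $S(T)$ has zero entries at the non-tree edges of $G$, so it does not lie in $S(G)$. If instead you pin \emph{all} edges of $G$ and rerun the transversality argument, then Lemma~\ref{lem:functions} no longer applies, and its proof genuinely uses the tree structure (the unique neighbour $k_0$ on the shortest path, and the definitions of $c(i,j)$ and $s(i,j)$ via the unique path); extra edges add terms to the eigenvector recursion \eqref{eq:term} whose size depends on the exponents you assign them, and they can change the leading-order exponent or constant of $u_n(i,j)$ (a non-tree edge can provide a lower-weight route), not merely ``make the decay faster.'' So the asserted ``routine-but-delicate estimate'' is in fact a substantive missing argument. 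The clean repair is the one the paper uses, and you already have the needed ingredient: the tree-case matrix $A'$ has the SSP, so by Theorem~\ref{thm:ssp} there is $A\in S(G)$ with the SSP, the same (distinct) eigenvalues, and $\|A-A'\|$ arbitrarily small; since the eigenvalues are simple, the diagonalising orthogonal matrix $U$ can be taken arbitrarily close to $U'$, and the nowhere-zero condition on the finitely many vectors $U'\vv$, $\vv\in\mc Y$, is open, so it survives the perturbation. This is precisely why the SSP is carried through the tree case rather than being only part of the conclusion.
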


\begin{proof}
First we prove the statement for trees, so let $G'$ be a tree. 
Let us assume that the statement does not hold. Choose $g: E(G')\to \bN$ and $f:=N_0+g$ as in Lemma~\ref{lem:functions} so that the corresponding function $s:V(G)\times V(G)\to \bN$ satisfies $s(i,j)=s(k,\ell)\iff \{i,j\}=\{k,\ell\}$. (This may be guaranteed by a suitable choice of $g$.) Further, let $A_n \in \mc M_{f,G'}(t_n) \cap \El$ also be as in Lemma \ref{lem:functions}, with the SSP (which we can guarantee by Lemma~\ref{lem:SSPness}) and let $U_n$ be orthogonal matrices with non-negative diagonal satisfying $U_n\trans A_nU_n=\Lambda$.  By assumption, for every $n$ there exists $\vv \in \mc Y$ so that some entry of $U_n \vv$ is zero. Passing to a subsequence, we may assume that there is some fixed vector $\vv=(v_1,\dots,v_m)\trans \in \mc Y$ and a fixed index $k\in [m]$ so that $(U_n \vv)_k=0$ for every $n\in \bN$. %
Let $\vu_n=\npmatrix{u_n(k,1) & u_n(k,2) & \ldots & u_n(k,m) }$ denote the $k$-th row of $U_n$, hence we are assuming $\vu_n\trans \vv=0$ for all $n\in \bN$.   We aim to arrive at the contradiction by proving  $\vv=\0$. 
From Lemma \ref{lem:functions} we know that $U_n\to I$, hence $v_k=0$. Let $i_1, i_2,\ldots, i_{m-1} \in [m]\setminus \{k\}$ be such that $s(k,i_1) < s(k, i_2)<\dots<s(k,i_{m-1})$, and $\ell\in [m-1]$ be minimal with $v_{i_\ell} \neq 0$. Now: 
$$0=\frac{1}{t_n^{s(k,i_\ell)}}\vu_n\trans \vv=\sum_{r=\ell}^{m-1} \frac{u_n(k,i_r)}{t_n^{s(k,i_\ell)}}v_{i_r}.$$
By Lemma \ref{lem:functions}, we know that $\frac{u_n(k,i_\ell)}{t_n^{s(k,i_\ell)}}$ converges to a nonzero constant and $\frac{u_n(k,i_r)}{t_n^{s(k,i_\ell)}}$ converges to zero for $r>\ell$. Hence $v_{i_\ell}=0$, %
a contradiction.

Now, let $G$ be a connected graph, and $G'$ be a spanning tree of $G$.  Since the claim holds for $G'$, there exists $A' \in S(G')$ with the SSP so that ${U'}\trans A'U'=\Lambda$, ${U'}\trans U'=I_m$, and $U'\vv$ is nowhere-zero for all $\vv \in \mc Y$. Since $A'$ has the SSP, by Theorem \ref{thm:ssp} there exists $A \in S(G)$ with the SSP arbitrarily  close to $A'$ with the same eigenvalues as $A'$.  Since the eigenvalues are distinct, the $\lambda_k$ eigenspaces are close for $A$ and $A'$, so for $A$ and $A'$ sufficiently close, there is an orthogonal matrix $U$ diagonalising $A$ which is sufficiently close to $U'$ to ensure that $U\vv$ is also nowhere zero for all $\vv\in \mc Y$.\end{proof}

\begin{theorem}\label{01GR}
If $G$ is any graph, then every $0$-$1$ multiplicity matrix which fits $G$ is generically realisable for $G$.
\end{theorem}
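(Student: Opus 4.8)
The plan is to reduce Theorem~\ref{01GR} to the connected-graph statement already established in Theorem~\ref{thm:genericU}. Recall that a $0$-$1$ multiplicity matrix $V$ fitting $G$ means: $G$ has connected components $G_1,\dots,G_k$, the matrix $V$ is $r\times k$ with entries in $\{0,1\}$, the $i$th column sum equals $|G_i|$, and each column is an ordered multiplicity list realisable in $S(G_i)$. Since the entries are $0$ or $1$, each column sum being $|G_i|$ forces exactly $|G_i|$ of the entries in column $i$ to equal $1$; in other words, the $i$th column records, for each of the $r$ ``eigenvalue slots'', whether $G_i$ uses that eigenvalue (with multiplicity one) or not. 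So generic realisability of column $i$ amounts to: for any choice of reals $\lambda_1<\dots<\lambda_r$, the sub-list of those $\lambda_j$ with $V_{ji}=1$ is a spectrum with all multiplicities one that is generically realisable for the connected graph $G_i$. But that is precisely the $|G_i|$-element case of Theorem~\ref{thm:genericU}, applied with $m=|G_i|$ and with the $m$ distinct real numbers being exactly the selected eigenvalues.

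Concretely, I would first fix an arbitrary assignment of distinct reals to the $r$ slots. For each component $G_i$ I let $\Lambda_i$ be the diagonal matrix whose entries are the selected eigenvalues (those $\lambda_j$ with $V_{ji}=1$), which has size $|G_i|\times|G_i|$, and I let $\mc Y_i$ be an arbitrary prescribed finite subset of $\R^{|G_i|}\setminus\{\0\}$. By Theorem~\ref{thm:genericU} (with $G=G_i$ there) there is a matrix $A_i\in S(G_i)$ with the SSP and an orthogonal $U_i$ with $U_i\trans A_i U_i=\Lambda_i$ such that $U_i\vv$ is nowhere-zero for every $\vv\in\mc Y_i$. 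Then $A:=A_1\oplus\dots\oplus A_k\in S(G)$ has the prescribed spectrum with multiplicities given by $V$, and $U:=U_1\oplus\dots\oplus U_k$ is orthogonal with $U\trans A U=\Lambda$ (the full diagonal matrix). The only remaining point is to check that this block-diagonal construction delivers the genericity clause of the definition: for a prescribed finite $\mc Y\subseteq\R^n\setminus\{\0\}$, writing each $\yy\in\mc Y$ in block form $\yy=(\yy^{(1)},\dots,\yy^{(k)})$ according to the components, we have $U\yy$ nowhere-zero iff $U_i\yy^{(i)}$ is nowhere-zero for every $i$ with $\yy^{(i)}\ne\0$ --- but if $\yy^{(i)}=\0$ then the corresponding block of $U\yy$ is the zero vector, so $U\yy$ need not be nowhere-zero. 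I will therefore need to unwind the definition of ``generically realisable for $G$'' carefully: it asks each column of $V$ to be generically realisable for the corresponding component, a statement quantified component-by-component, so the $\mc Y$ appearing there lives in $\R^{|G_i|}$, not in $\R^n$, and the block-diagonal assembly is exactly right.

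The main (mild) obstacle is bookkeeping rather than mathematics: matching the indexing of the $r$ eigenvalue slots against the $|G_i|$ actually-used eigenvalues in each component, and being precise that ``$V$ is generically realisable for $G$'' is by definition the conjunction over $i$ of ``column $i$ of $V$ is a generically realisable ordered multiplicity vector for $G_i$'', which by the $0$-$1$ hypothesis is the all-ones-multiplicity case handled by Theorem~\ref{thm:genericU}. No new analytic input is needed --- Lemma~\ref{lem:SSPness}, Lemma~\ref{lem:distinct}, Lemma~\ref{lem:functions} and Theorem~\ref{thm:ssp} have already done the work inside Theorem~\ref{thm:genericU}. I would close by remarking on the special case $G=P_n$: every ordered multiplicity vector realisable for a path is a $0$-$1$ list (paths have no repeated eigenvalues, as $S(P_n)$ consists of irreducible tridiagonal matrices), so every multiplicity matrix fitting $P_n$ is a $0$-$1$ matrix and hence generically realisable, i.e.\ $P_n$ is generically realisable in the sense of~\cite{levene2020orthogonal}.
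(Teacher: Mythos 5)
Your proposal is correct and follows essentially the same route as the paper: since each column of a $0$-$1$ matrix fitting $G$ prescribes distinct eigenvalues for the corresponding connected component, Theorem~\ref{thm:genericU} applies column-by-column, and the definition of generic realisability of a multiplicity matrix is exactly this component-wise condition. The block-diagonal assembly you sketch is superfluous (as you yourself observe, the finite sets $\mc Y$ in the definition live in $\R^{|G_i|}$, not $\R^n$), but it does no harm.
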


\begin{proof}
  Given such a matrix $V$, label the connected components $G_1,\dots,G_k$ of $G$ so that the multiplicity vector $V\e_i$ fits $G_i$. Every entry of $V\e_i$ is $0$ or $1$, so by Theorem~\ref{thm:genericU}, $V\e_i$ is generically realisable for $G_i$. Hence, $V$ is generically realisable for $G$.
\end{proof}

If every multiplicity matrix for a graph~$G$ is generically realisable for~$G$, then we say that $G$ is \emph{generically realisable}. 
This is a strong requirement for a graph; in particular, it implies that $G$ is spectrally arbitrary for every multiplicity matrix that can be realised by $G$. In fact, the only families of generically realisable graphs previously known are unions of complete graphs~\cite{levene2020orthogonal}. We can now extend this to include paths.

\begin{corollary}\label{thm:pathsGR}
  \begin{enumerate}
  \item The path $P_n$ is a generically
    realisable graph, for every $n\in \bN$.
  \item If every connected component of a graph $G$ is either a complete graph or a path, then $G$ is generically realisable.
\end{enumerate}
\end{corollary}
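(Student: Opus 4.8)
The plan is to derive both parts directly from Theorem~\ref{01GR}, supplemented for part~(b) by the result of~\cite{levene2020orthogonal} that every union of complete graphs is generically realisable.

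For part~(a): since $P_n$ is connected, a multiplicity matrix for $P_n$ is simply a single column, namely an ordered multiplicity list realised by some $A\in S(P_n)$. But every $A\in S(P_n)$ is an irreducible symmetric tridiagonal matrix, and such matrices have $n$ distinct eigenvalues; hence in any such ordered multiplicity list each entry is $0$ or $1$ (with exactly $n$ ones). Thus every multiplicity matrix for $P_n$ is a $0$-$1$ matrix, and it automatically fits $P_n$, so by Theorem~\ref{01GR} it is generically realisable for $P_n$. As this holds for all multiplicity matrices for $P_n$, the path $P_n$ is a generically realisable graph.

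For part~(b): the point is that generic realisability of a graph is a component-wise property. Indeed, if $G$ has connected components $G_1,\dots,G_k$ and $V$ is any multiplicity matrix for $G$, then by definition $V$ is generically realisable for $G$ exactly when, for each $i$, the $i$th column of $V$ --- which is a realisable ordered multiplicity list for $G_i$ --- is generically realisable for $G_i$. So $G$ is generically realisable provided each $G_i$ is. When $G_i$ is a path this is part~(a); when $G_i=K_{n_i}$ is complete, it follows from~\cite{levene2020orthogonal}, since $K_{n_i}$ is itself a (one-term) union of complete graphs and hence generically realisable. Therefore every component of $G$ is generically realisable, and so is $G$.

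The argument has no substantial obstacle once Theorem~\ref{01GR} is available: part~(a) reduces to the classical simplicity of the spectrum of an irreducible tridiagonal matrix, and part~(b) is a matter of unwinding the definition of a generically realisable graph to see that it respects disjoint unions and then quoting~\cite{levene2020orthogonal}. The only minor point to check is that allowing zero entries in an ordered multiplicity list (which is forced when the columns of a multiplicity matrix must share a common number of rows) affects neither the $0$-$1$ property used in~(a) nor the component-wise reduction used in~(b); both are immediate.
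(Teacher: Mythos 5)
Your argument is correct and matches the paper's proof: part (a) rests on the classical fact that matrices in $S(P_n)$ (irreducible tridiagonal) have simple eigenvalues, so every multiplicity matrix for $P_n$ is a $0$-$1$ matrix and Theorem~\ref{01GR} applies, and part (b) follows component-wise since complete graphs are generically realisable by~\cite{levene2020orthogonal}. The paper's proof is the same, merely citing~\cite{MR0244285} for the maximum-multiplicity-one fact rather than spelling out the tridiagonal argument.
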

\begin{proof}
  The maximum multiplicity of an eigenvalue of a path is one~\cite{MR0244285}, so every
  multiplicity vector for~$P_n$ is a $0$-$1$ vector, hence it is generically realisable by Theorem~\ref{01GR}. Complete graphs are also generically realisable~\cite{levene2020orthogonal}, so the second assertion follows immediately.
\end{proof}

\section{Applications}

In \cite{levene2020orthogonal} the authors developed an approach to study $q(G)$, where $G$ is the join of two graphs. We now give some applications of preceding results to this problem.

First we briefly review the set up from \cite{levene2020orthogonal}.
For a matrix $X$ with at least $3$ rows, we write $\widetilde X$ for the matrix obtained by deleting the first row and the last row of $X$.  Let $\mathcal{S}^{m\times n}$ denote the set of $m\times n$ matrices with entries in a set $\mathcal S$, and let $\bN_0$ be the set of non-negative integers. Given $r,k,\ell\in \bN$ with $r\ge3$, two matrices $V \in \bN_0^{r \times k}$ and $W \in \bN_0^{r \times \ell}$ are said to be \emph{compatible}
  if $\widetilde V\1_k=\widetilde W\1_\ell$ and $\widetilde V\trans \widetilde W$ is nowhere-zero. We say that two graphs $G,H$ \emph{have compatible multiplicity matrices} if there exist compatible matrices $V,W$ where $V$ is a multiplicity matrix for $G$ and $W$ is a multiplicity matrix for $H$.
  
  Which graphs $G$ and $H$ have $q(G\vee H)=2$? The answer is closely related to the existence of compatible multiplicity matrices.

\begin{theorem}\cite[Theorems~2.5 and~2.14]{levene2020orthogonal}\label{thm:q=2}
Let $G$ and $H$ be two graphs. If $q(G\vee H)=2$, then $G$ and $H$ necessarily have compatible multiplicity matrices. 

Moreover, if there exist compatible generically realisable multiplicity matrices $V$ and $W$ for $G$ and $H$, then $q(G\vee H)= 2$.
\end{theorem}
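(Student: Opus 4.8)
I would prove the two implications separately and by quite different means: the first is a short spectral computation, and the second — which is where the hypothesis of generic realisability is actually spent — is the substantive half. Throughout I would write a matrix $A\in S(G\vee H)$ in block form $A=\left(\begin{smallmatrix}A_G & B\\ B\trans & A_H\end{smallmatrix}\right)$ with $A_G\in S(G)$, $A_H\in S(H)$, and $B$ the $|G|\times|H|$ off-diagonal block; since in $G\vee H$ every vertex of $G$ is adjacent to every vertex of $H$, such a $B$ is always nowhere-zero.

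For necessity, suppose $A$ has exactly two eigenvalues $\lambda_1<\lambda_r$ and set $2c:=\lambda_1+\lambda_r$. Expanding $(A-\lambda_1I)(A-\lambda_rI)=0$ blockwise, the diagonal blocks give $BB\trans=(A_G-\lambda_1I)(\lambda_rI-A_G)$ and $B\trans B=(A_H-\lambda_1I)(\lambda_rI-A_H)$; since Cauchy interlacing forces $\sigma(A_G),\sigma(A_H)\subseteq[\lambda_1,\lambda_r]$, this identifies $\operatorname{range}B$ with the span of the eigenvectors of $A_G$ whose eigenvalue lies strictly inside $(\lambda_1,\lambda_r)$, and likewise $\operatorname{range}B\trans$ for $A_H$. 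The off-diagonal block gives $(A_G-\lambda_1I)B=B(\lambda_rI-A_H)$, so $B$ restricts to an isomorphism between these two \emph{interior} eigenspaces intertwining $A_G-\lambda_1I$ with $\lambda_rI-A_H$; hence $A_G$ and $2cI-A_H\in S(H)$ have the same interior spectrum with multiplicity. I would then let $V$ be the multiplicity matrix of $A_G$ and $W$ that of $2cI-A_H$, both recorded against the common list of eigenvalue-slots consisting of $\lambda_1$, the interior eigenvalues of $A_G$, and $\lambda_r$. The equality of interior spectra just obtained is exactly $\widetilde V\1=\widetilde W\1$. For $\widetilde V\trans\widetilde W$: were some $(i,j)$-entry zero, the components $G_i$ and $H_j$ would share no eigenvalue under the reflection $\nu\mapsto 2c-\nu$; then decomposing an arbitrary vector supported on $H_j$ into eigenvectors of $A[H_j]$ and pushing it through $B$ — which annihilates the non-interior eigenvectors and sends each interior eigenvector of $A_H$ into the matching interior eigenspace of $A_G$ — would force the $G_i\times H_j$ block of $B$ to vanish, contradicting that $B$ is nowhere-zero. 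So $V$ and $W$ would be compatible multiplicity matrices.

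For sufficiency, given compatible generically realisable $V,W$, I would fix eigenvalue-slots $\lambda_1<\dots<\lambda_r$ and set $\theta_s:=2c-\lambda_s$, $\kappa_s:=(\lambda_s-\lambda_1)(\lambda_r-\lambda_s)$, where $2c=\lambda_1+\lambda_r$. Using generic realisability of the columns of $V$, I would build $A_G=\bigoplus_iA[G_i]\in S(G)$ in which $A[G_i]$ has eigenvalue $\lambda_s$ of multiplicity $V_{si}$, and — using that generic realisability is preserved when the realising matrix is negated (which reverses its multiplicity list) — build $A_H=\bigoplus_jA[H_j]\in S(H)$ in which $A[H_j]$ has eigenvalue $\theta_s$ of multiplicity $W_{sj}$. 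Writing $E_s^G,E_s^H$ for orthonormal bases of the $\lambda_s$-eigenspace of $A_G$ and the $\theta_s$-eigenspace of $A_H$ (equal-dimensional for interior $s$, since $\widetilde V\1=\widetilde W\1$), I would pick, for each interior $s$, an orthogonal matrix $O_s$ and set $B:=\sum_{2\le s\le r-1}\sqrt{\kappa_s}\,E_s^GO_s(E_s^H)\trans$. A short computation with the orthonormality relations gives $BB\trans=(A_G-\lambda_1I)(\lambda_rI-A_G)$, $B\trans B=(A_H-\lambda_1I)(\lambda_rI-A_H)$ and $(A_G-\lambda_1I)B=B(\lambda_rI-A_H)$, so $(A-\lambda_1I)(A-\lambda_rI)=0$; hence the resulting $A\in S(G\vee H)$ has at most two distinct eigenvalues, and exactly two because $G\vee H$ has an edge, yielding $q(G\vee H)=2$.

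The one genuine obstacle is to choose these data so that $B$ is nowhere-zero, and here is where I would spend the strong hypothesis. For a vertex $v$ of the component $H_j$ one has $B\e_v=E^G\zz_v$, where $E^G$ collects the interior eigenvectors of $A_G$ and $\zz_v$ is assembled from the $v$th entries of the interior eigenvectors of $A[H_j]$, scaled by the $\sqrt{\kappa_s}$ and rotated by the $O_s$. Because $\widetilde V\trans\widetilde W$ is nowhere-zero, every $H_j$ meets some interior slot, so with a nowhere-zero eigenbasis on the $H$-side we have $\zz_v\ne\0$, and a generic choice of the $O_s$ additionally makes the restriction of $\zz_v$ to each component $G_i$'s block of eigenvector coordinates nonzero. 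I would then fix the $H$-side and the $O_s$ first and, for each $i$, invoke the full strength of generic realisability of $V$ to choose the orthogonal diagonalising matrix of $A[G_i]$ so that it carries every one of the finitely many vectors $\{\zz_v|_{G_i}:v\in V(H)\}$ to a nowhere-zero vector; this makes $B\e_v$ nowhere-zero for all $v$, i.e.\ $B$ nowhere-zero. I expect essentially all of the content to lie in this last step — it is exactly where a bare nowhere-zero eigenbasis does not suffice and the stronger notion of generic realisability (the output of Theorem~\ref{01GR}) is indispensable — with the main nuisance elsewhere being the bookkeeping that lines up the interior eigenspaces of $A_G$ and $A_H$ slot by slot under the reflection $\lambda_s\leftrightarrow\theta_s$.
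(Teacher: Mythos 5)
This theorem is not proved in the paper at all: it is imported verbatim from \cite[Theorems~2.5 and~2.14]{levene2020orthogonal}, so there is no in-paper argument to compare against. Your blind proof is, as far as I can check, correct and is essentially the argument of the cited source, up to normalisation (the reference phrases two-eigenvalue matrices via the associated symmetric orthogonal matrix, whereas you work directly with the identity $(A-\lambda_1 I)(A-\lambda_r I)=0$; the block identities, the matching of the interior spectra of $A_G$ and $2cI-A_H$, and the construction $B=\sum_s\sqrt{\kappa_s}\,E_s^G O_s (E_s^H)\trans$ are the same mechanism). Both halves hold up: in the necessity direction your derivation of $\widetilde V\1=\widetilde W\1$ from the intertwining and of the nowhere-zero condition on $\widetilde V\trans\widetilde W$ from the block-diagonal structure of $A_G$, $A_H$ is sound (note the interior spectrum is nonempty because $B\ne 0$, which gives the required $r\ge 3$); in the sufficiency direction you correctly isolate where generic realisability, rather than a mere nowhere-zero eigenbasis, is needed, namely choosing the diagonalisers of the $A[G_i]$ after fixing the $H$-side data and the $O_s$, applied to the finite set of vectors $\zz_v|_{G_i}$. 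The only points left implicit --- that generic realisability is preserved under the affine reflection $\nu\mapsto 2c-\nu$ (so the reversed columns of $W$ are again generically realisable), that the nonvanishing of $\zz_v|_{G_i}$ uses exactly the compatibility entry $(\widetilde V\trans\widetilde W)_{ij}\ne 0$, and the measure-zero argument for a generic choice of the $O_s$ --- are all routine and easily filled, so I see no genuine gap.
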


By Corollary~\ref{thm:pathsGR},  in the case of unions of paths or complete graphs we can strengthen this to a necessary and sufficient condition.

\begin{corollary}\label{cor:PK-compat-iff}
  If $G$ and $H$ are unions of paths or complete graphs, then $q(G\vee H)= 2$ if and only if $G$ and $H$ have a pair of compatible multiplicity matrices.
\end{corollary}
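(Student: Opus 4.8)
The plan is to combine Theorem~\ref{thm:q=2} with the fact (Corollary~\ref{thm:pathsGR}) that unions of paths and complete graphs are generically realisable. The forward direction is immediate: if $q(G\vee H)=2$, then by the first part of Theorem~\ref{thm:q=2}, $G$ and $H$ have compatible multiplicity matrices. So the content is entirely in the converse.

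For the converse, suppose $G$ and $H$ have a pair of compatible multiplicity matrices $V$ and $W$. Since every connected component of $G$ (and of $H$) is a path or a complete graph, Corollary~\ref{thm:pathsGR}(b) tells us that $G$ and $H$ are generically realisable graphs; that is, \emph{every} multiplicity matrix for $G$ is generically realisable for $G$, and likewise for $H$. In particular, the specific matrices $V$ and $W$ from our compatible pair are generically realisable. Thus $V$ and $W$ form a pair of compatible \emph{generically realisable} multiplicity matrices, and the second part of Theorem~\ref{thm:q=2} yields $q(G\vee H)=2$.

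I do not expect any genuine obstacle here, since both halves are quotations of results already available: the only point to be careful about is that ``compatible'' is a property of the pair $(V,W)$ while ``generically realisable'' is a property of each matrix individually relative to its graph, so one must note that these two conditions are simultaneously satisfied by the \emph{same} pair $V,W$ — which is automatic, because generic realisability here holds for all multiplicity matrices of $G$ and $H$, not just some. One might add a sentence observing that this is exactly where the hypothesis on the components is used: in general one only knows that there \emph{exist} compatible generically realisable matrices would suffice, but here every multiplicity matrix qualifies, so having any compatible pair is enough.

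\begin{proof}
  If $q(G\vee H)=2$, then $G$ and $H$ have compatible multiplicity matrices by Theorem~\ref{thm:q=2}. Conversely, suppose $V$ and $W$ are compatible multiplicity matrices for $G$ and $H$ respectively. By Corollary~\ref{thm:pathsGR}(b), both $G$ and $H$ are generically realisable, so in particular $V$ is generically realisable for $G$ and $W$ is generically realisable for $H$. Hence $V$ and $W$ are compatible generically realisable multiplicity matrices, and Theorem~\ref{thm:q=2} gives $q(G\vee H)=2$.
\end{proof}
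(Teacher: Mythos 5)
Your proof is correct and is exactly the argument the paper intends: the forward direction is the first part of Theorem~\ref{thm:q=2}, and the converse combines Corollary~\ref{thm:pathsGR} (every multiplicity matrix for a union of paths or complete graphs is generically realisable) with the second part of Theorem~\ref{thm:q=2}. No gaps.
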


For general graphs, by combining Theorem~\ref{01GR} and Theorem~\ref{thm:q=2}, we obtain the following purely combinatorial sufficient condition for $q(G\vee H)$ to be $2$.

\begin{corollary}\label{cor:P-to-connected-q=2}
Let $G$ and $H$ be two graphs.
 If there exist compatible $0$-$1$ matrices $V$ and $W$ that fit $G$ and $H$, respectively, then $q(G\vee H)=2$.
\end{corollary}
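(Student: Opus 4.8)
The plan is to deduce this directly from Theorem~\ref{01GR} and Theorem~\ref{thm:q=2}, with essentially no new work. Suppose $V$ and $W$ are compatible $0$-$1$ matrices fitting $G$ and $H$ respectively. The first thing I would do is check that being a \emph{multiplicity matrix} (not just a matrix that fits) is automatic here: by Theorem~\ref{01GR}, a $0$-$1$ matrix fitting a graph is generically realisable for it, and in particular each column of $V$ is an ordered multiplicity list realised by a matrix in $S(G_i)$, so $V$ genuinely is a multiplicity matrix for $G$; likewise $W$ is a multiplicity matrix for $H$. (One should be mildly careful that "fits" already builds in the column-sum condition, and that a $0$-$1$ vector is always a legitimate ordered multiplicity list for \emph{some} graph of the right order only once we know realisability — which is exactly what Theorem~\ref{01GR} supplies.)

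Next I would invoke Theorem~\ref{01GR} again to upgrade "multiplicity matrix" to "generically realisable multiplicity matrix": since $V$ and $W$ are $0$-$1$ matrices fitting $G$ and $H$, Theorem~\ref{01GR} says precisely that $V$ is generically realisable for $G$ and $W$ is generically realisable for $H$. Combined with the hypothesis that $V$ and $W$ are compatible, this places us exactly in the situation of the second half of Theorem~\ref{thm:q=2}: we have a pair of compatible generically realisable multiplicity matrices for $G$ and $H$. That theorem then yields $q(G\vee H)=2$, which is the desired conclusion. The proof is thus a two-line composition: $0$-$1$ fitting $\Rightarrow$ generically realisable multiplicity matrix (Theorem~\ref{01GR}), and compatible generically realisable multiplicity matrices $\Rightarrow q(G\vee H)=2$ (Theorem~\ref{thm:q=2}).

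Since all the substance is already packaged in the two cited results, I do not anticipate a genuine obstacle. The only point requiring any care is bookkeeping: Corollary~\ref{cor:P-to-connected-q=2} is stated for "compatible $0$-$1$ matrices that fit $G$ and $H$", whereas Theorem~\ref{thm:q=2} is phrased in terms of "compatible generically realisable multiplicity matrices", so one must explicitly note that Theorem~\ref{01GR} is the bridge converting the former hypothesis into the latter — i.e.\ that a $0$-$1$ matrix fitting a graph is automatically a multiplicity matrix for it (not merely a nonnegative integer matrix with the right column sums) and moreover a generically realisable one. Once that remark is in place, the proof is complete. I would write it as: "By Theorem~\ref{01GR}, $V$ is a generically realisable multiplicity matrix for $G$ and $W$ is a generically realisable multiplicity matrix for $H$; since $V$ and $W$ are compatible, Theorem~\ref{thm:q=2} gives $q(G\vee H)=2$."
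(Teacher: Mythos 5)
Your proposal is correct and is exactly the paper's argument: the paper derives this corollary precisely "by combining Theorem~\ref{01GR} and Theorem~\ref{thm:q=2}", with Theorem~\ref{01GR} serving as the bridge from ``$0$-$1$ matrix fitting $G$'' to ``generically realisable multiplicity matrix for $G$'', just as you note. Your extra remark that fitting suffices (realisability of each column being supplied by Theorem~\ref{01GR} itself) is a sensible piece of bookkeeping consistent with the paper's proof of that theorem.
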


In general, deciding whether there exist compatible $0$-$1$ matrices with prescribed column sums seems to be a difficult combinatorial question, which we plan to examine further in upcoming work.  %

Monfared and Shader \cite[Theorem~5.2]{MR3506498} proved that $q(G \vee H)=2$ if $G$ and $H$ are connected graphs with $|G|=|H|$.
Using Corollary~\ref{cor:P-to-connected-q=2}, it is now simple to generalise their result to the case $\bigabs{|G|-|H|}\leq 2$, and also to pairs of disconnected graphs with equal numbers of connected components. %

\begin{theorem}\label{thm:diff=2}
  Let $k\in \bN$, and for $i\in [k]$, let $G_i$ and $H_i$ be connected graphs with $\bigabs{|G_i| -|H_i|}\leq 2$. Then $$q(\bigcup_{i=1}^k G_{i} \vee  \bigcup_{i=1}^k H_{i})=2.$$ %

Moreover, if  $m_{i}+2 \geq |H_i|$ for all  $i \in [k]$, then
 $$q(\bigcup_{i=1}^k K_{m_i} \vee  \bigcup_{i=1}^k H_{i})=2.$$ 
 \end{theorem}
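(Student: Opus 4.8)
\emph{Plan.} I would deduce both assertions from results already available: the first from Corollary~\ref{cor:P-to-connected-q=2} and the second from Theorem~\ref{thm:q=2}, in each case by writing down the required pair of matrices explicitly. Throughout set $b_i:=|H_i|$, and in the first claim also $a_i:=|G_i|$; note that each $G_i,H_i$ (and, in the second claim, each $K_{m_i}$) is non-empty, so $m_i\ge 1$ and $a_i,b_i\ge 1$. The key device in both constructions is to make the two matrices have \emph{identical interiors}, i.e.\ identical after deleting the first and last rows, and to absorb the difference in column sums into those two deleted rows; in the second construction the larger flexibility of complete graphs lets us absorb an arbitrarily large difference into one deleted row.

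\emph{First claim.} Put $r:=2+\max_i\max(a_i,b_i)$, so $r\ge 3$ and $r-2\ge a_i,b_i$ for all $i$. For each $i$ the range $\max\{1,a_i-2,b_i-2\}\le p_i\le\min\{a_i,b_i\}$ is non-empty precisely because $|a_i-b_i|\le 2$ and $a_i,b_i\ge1$; fix such a $p_i$. Let the entries in rows $2,\dots,r-1$ of the $i$th column of both $V$ and $W$ be one and the same $0$-$1$ vector $\mu_i$ of weight $p_i$ having a $1$ in row $2$ (possible since $1\le p_i\le r-2$), and distribute the remaining $a_i-p_i\in\{0,1,2\}$ ones of the $i$th column of $V$, and the remaining $b_i-p_i\in\{0,1,2\}$ ones of the $i$th column of $W$, among rows $1$ and $r$. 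Then $V$ fits $\bigcup_i G_i$, $W$ fits $\bigcup_i H_i$, and $V,W$ are $0$-$1$ matrices in $\bN_0^{r\times k}$; moreover $\widetilde V=\widetilde W$, so $\widetilde V\1_k=\widetilde W\1_k$, and since every column of $\widetilde V$ has a $1$ in its first row, $\widetilde V\trans\widetilde W$ is nowhere-zero. Hence $V$ and $W$ are compatible and Corollary~\ref{cor:P-to-connected-q=2} gives $q=2$.

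\emph{Second claim.} Now only $b_i\le m_i+2$ is assumed, so the column sums $m_i$ and $b_i$ can be far apart and no $0$-$1$ matrix fitting $\bigcup_iK_{m_i}$ can share an interior with one fitting $\bigcup_iH_i$; instead I use Theorem~\ref{thm:q=2} together with the fact that a complete graph realises non-$0$-$1$ lists. Put $r:=2+\max_i b_i\ (\ge 3)$ and, for each $i$, fix $p_i$ with $\max\{1,b_i-2\}\le p_i\le\min\{m_i,b_i\}\ (\le r-2)$; this range is non-empty because $b_i\le m_i+2$ and $m_i,b_i\ge1$. Let $W\in\bN_0^{r\times k}$ be the $0$-$1$ matrix whose $i$th column has interior (rows $2,\dots,r-1$) a weight-$p_i$ vector $\mu_i$ with a $1$ in row $2$, plus $b_i-p_i\in\{0,1,2\}$ ones split between rows $1$ and $r$; this fits $\bigcup_iH_i$, so by Theorem~\ref{01GR} it is a generically realisable multiplicity matrix. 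Let $V\in\bN_0^{r\times k}$ have $i$th column with the same interior $\mu_i$, entry $m_i-p_i\ge0$ in row $1$, and $0$ in row $r$. The nonzero entries of this column form the ordered list consisting of one block of size $m_i-p_i$ (placed as the smallest eigenvalue, and omitted when $m_i=p_i$) followed by $p_i$ simple eigenvalues, and this list is realisable in $S(K_{m_i})$ — for instance by a small perturbation of $\alpha I_{m_i}+\beta\1\1\trans$ inside $\1^\perp$ — so $V$ is a multiplicity matrix for $\bigcup_iK_{m_i}$, hence generically realisable since complete graphs are~\cite{levene2020orthogonal}. Again $\widetilde V=\widetilde W$ gives $\widetilde V\1_k=\widetilde W\1_k$ and $\widetilde V\trans\widetilde W$ nowhere-zero, so $V$ and $W$ are compatible and Theorem~\ref{thm:q=2} gives $q=2$.

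The arithmetic checks — non-emptiness of the admissible ranges for $p_i$ (this is where $|a_i-b_i|\le2$, respectively $b_i\le m_i+2$, is used) and that the columns have the prescribed sums — are routine, and the compatibility bookkeeping is immediate once $\widetilde V=\widetilde W$. The point I expect to require the most care is verifying that the non-$0$-$1$ columns of $V$ in the second claim really are realisable ordered multiplicity lists for the corresponding complete graph, i.e.\ that ``one multiplicity block together with several simple eigenvalues (with the block as least eigenvalue)'' is realisable in $S(K_{m_i})$, since the appeal to generic realisability of complete graphs depends on $V$ being an honest multiplicity matrix.
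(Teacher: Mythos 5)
Your proposal is correct and follows essentially the same route as the paper: both proofs build $V$ and $W$ with a common $0$-$1$ interior (the paper's block $E$, your columns $\mu_i$) whose first row is nowhere-zero, absorb the column-sum discrepancy of at most $2$ into the deleted first and last rows, and then invoke Corollary~\ref{cor:P-to-connected-q=2} for the first claim and Theorem~\ref{thm:q=2} (with generic realisability of complete graphs from~\cite{levene2020orthogonal} and Theorem~\ref{01GR}) for the second. Your only deviation is cosmetic — a flexible choice of interior weight $p_i$ and an explicit check that the non-$0$-$1$ columns are realisable ordered multiplicity vectors for $K_{m_i}$, a point the paper handles simply by citing~\cite{levene2020orthogonal}.
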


\begin{proof}
  Let $p_i:=\min\{|G_i|,|H_i|\}$ for $i \in [k]$, $p:=\max \{p_i\colon i\in [k]\}$ and ${\bf q}_i:=\sum_{j \in [p_i]} \e_i \in \{0,1\}^{p}$.
  Let $E:=\npmatrix{
 {\bf q}_1 & {\bf q}_2 & \cdots & {\bf q}_k } 
 \in \{0,1\}^{p \times k}$
 and consider
  \begin{equation}\label{eq:VW k components}
  V:=\npmatrix{{\bf v}_1 \\ E \\ {\bf v}_{p+2}}\in \{0,1\}^{(p+2) \times k} \text{ and } W:=\npmatrix{{\bf w}_1 \\ E \\ {\bf w}_{p+2}}\in \{0,1\}^{(p+2) \times k},
  \end{equation}
  where ${\bf v}_1,{\bf v}_{p+2},{\bf w}_1,{\bf w}_{p+2}\in \{0,1\}^{1\times k}$ are chosen so that $\1_{p+2}\trans V \e_i=|G_i|$, and $\1_{p+2}\trans W \e_i=|H_i|$ for all $i\in [k]$. (The existence of such vectors is assured by the condition $\bigabs{|G_i|-|H_i|}\le2$.) Since $\widetilde V=\widetilde W=E$ and the first row of $E$ is nowhere-zero, $V$ and $W$ are compatible $0$-$1$ matrices fitting $\bigcup_{i=1}^k G_{i}$ and $\bigcup_{i=1}^k H_{i}$, respectively, hence  $q(\bigcup_{i=1}^k G_{i} \vee  \bigcup_{i=1}^k H_{i})=2$ by Corollary~\ref{cor:P-to-connected-q=2}.

  If $G=\bigcup_{i=1}^kK_{m_i}$, then the assumption $m_i \geq |H_i|-2$ for all $i \in [k]$ implies the existence of compatible %
  matrices $V$ and $W$ as in \eqref{eq:VW k components}, except that we don't require the first and the last row of $V$ to be $0$-$1$ vectors, i.e.~${\bf v}_1,{\bf v}_{p+2} \in \bN_0^k$. By \cite{levene2020orthogonal} and Theorem~\ref{01GR}, $V$ and $W$ are generically realisable multiplicity matrices for $G$ and $H$, respectively, so $q=2$ by Theorem~\ref{thm:q=2}.
 \end{proof}

For connected graphs $G$ and $H$  we have just seen that $q(G\vee H)=2$ if $\bigabs{|G|-|H|}\le 2$.  We now show that we generally cannot relax this inequality.

\begin{example}\label{ex:PjoinH}
 Suppose that $G=P_n$ and $H$ is a connected graph with $|H|\le n$. 
 Let us prove that in this case the condition %
 $\bigabs{|G|-|H|}\le 2$, i.e., $|H|\ge n-2$,
 is also necessary for $q(P_n \vee H)=2$.
 
 If $q(P_n\vee H)=2$, then by Theorem~\ref{thm:q=2} there exist compatible multiplicity matrices $V\in \bN_0^{(r+2) \times 1}$ and $W \in \bN_0^{(r+2) \times 1}$ for $P_n$ and  $H$, respectively.  The maximum multiplicity of a path is~$1$, so $V\in \{0,1\}^{(r+2)\times 1}$. By compatibility, %
 we have $\widetilde V=\widetilde W$
 and 
 without loss of generality, we can delete any zeros in these column vectors to reduce to the case 
 $$ V=\npmatrix{v_1 \\ \1_{r}\\ v_{r+2}} \text{ and } W=\npmatrix{w_{1} \\ \1_{r} \\ w_{r+2}},$$
 where  $v_1,v_{r+2} \in  \{0,1\}$ and $w_{1},w_{r+2}\in \bN_0$. Since these matrices fit $G$ and $H$, we have $r=n-(v_1+v_{r+2})\geq n-2$ and $|H|\geq r\geq n-2$, as required.
 
 Recall that if $T$ is a tree and $A\in S(T)$, then the extreme eigenvalues of $A$ have multiplicity one. Hence, by a similar argument to that of the previous paragraph, we have $q(T_1\vee T_2)=2\iff \bigabs{|T_1|-|T_2|}\le 2$ for any trees $T_1,T_2$.

\end{example}

We now turn to another class of applications of Theorem~\ref{01GR}, in which we obtain certain achievable spectra of partial joins.  %
We require additional terminology given below.

Recall that if $G$ is a graph and $W\subseteq V(G)$, then the \emph{vertex boundary} %
of $W$ in $G$ is the set of all vertices in $V(G)\setminus W$ which are joined to some vertex of $W$ by an edge of $G$.

Suppose that $G_1$ and $G_2$ are two disjoint  graphs and let $V_i \subseteq V(G_i)$ for $i=1,2$. The \emph{partial join} of graphs $G_1$ and $G_2$ is the graph $(G,V)=(G_1,V_1)\vee (G_2,V_2)$ formed from $G_1 \cup G_2$ by joining each vertex of $V_1$ to  each vertex of $V_2$.  If $V_2=V(G_2)$, then we write $(G_1,V_1)\vee G_2:= (G_1,V_1)\vee (G_2,V(G_2))$. 

Suppose $G$ and $G'$ are graphs so that $G$ is obtained from $G'$ by deleting $s$ vertices and adding arbitrary number of edges (in particular, $|G'|=|G|+s$). If a matrix $A\in S(G)$ has the SSP, then by  the Minor Monotonicity Theorem \cite[Theorem~6.13]{MR4074182} there exist a matrix $A' \in S(G')$ with the SSP, such that $\sigma(A')=\sigma(A) \cup \{\mu_1,\ldots,\mu_s\}$, where $\mu_i\ne \mu_j$ for distinct $i,j \in [s]$. In the next result we provide a statement of a similar flavour, that does not depend on the SPP. The construction given the Lemma was first developed in the context of the nonnegative inverse eigenvalue problem \cite{MR2133312,MR2372590}.

\begin{lemma}\label{lem:HisGR}
 Let $G$ be a graph, $V(G)=V_1\cup V_2$ a partition of $V(G)$, and $X\subseteq V_1$ the vertex boundary of $V_2$ in~$G$. Suppose $H$ is any connected graph with $|H|\ge |V_2|$ and consider the graph $G':=(G[V_1],X)\vee H$. %

 Let $M \in S(G)$  and suppose that $\sigma'$ is a multiset of real numbers so that %
 $\sigma(M[V_2]) \cup \sigma'$ is generically realisable for $H$. Then there exists $N \in S(G')$ with spectrum  $\sigma(N)=\sigma(M) \cup \sigma'$.
  \end{lemma}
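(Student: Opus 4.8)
The plan is to build $N$ by a block construction that glues $M$ together with a suitable matrix on $H$, using the generic realisability hypothesis to ensure the off-diagonal connections between $H$ and $X$ are nonzero in exactly the right pattern. First I would write $M$ in block form according to the partition $V(G)=V_1\cup V_2$, so that
\[
M=\npmatrix{M_{11} & B \\ B\trans & M_{22}},
\]
where $M_{11}=M[V_1]$, $M_{22}=M[V_2]$, and $B$ is the $|V_1|\times|V_2|$ block encoding the edges between $V_1$ and $V_2$. Since $X\subseteq V_1$ is the vertex boundary of $V_2$, every nonzero row of $B$ is indexed by a vertex of $X$; conversely each vertex of $X$ has at least one nonzero entry in its row of $B$. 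Next I would diagonalise the bottom block: pick an orthogonal $m\times m$ matrix $Q$ (where $m=|V_2|$) with $Q\trans M_{22}Q=D_2=\diag(\sigma(M[V_2]))$, and conjugate $M$ by $I_{|V_1|}\oplus Q$ to replace $M$ with a matrix having the same spectrum in which the $V_2$-block is diagonal and the coupling block becomes $BQ$.

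The key step is then to invoke the generic realisability of $\sigma(M[V_2])\cup\sigma'$ for $H$. Write $\sigma(M[V_2])\cup\sigma'$ as the spectrum of a diagonal matrix $D$ of size $|H|$ arranged so that its first $m$ diagonal entries are $D_2$ (in the order produced by $Q$) and the remaining $|H|-m$ entries realise $\sigma'$. Because $\sigma(M[V_2])\cup\sigma'$ is generically realisable for $H$, for any prescribed finite set of nonzero vectors there is an orthogonal $U$ with $UDU\trans\in S(H)$ and $U$ not annihilating any of those vectors. I would apply this with $\mc Y$ taken to be the set of (nonzero) rows of $BQ$ — equivalently the columns of $Q\trans B\trans$ — so that $U$ times each such row stays nowhere-zero. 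Now set
\[
N:=\npmatrix{M_{11} & BQ U\trans \\ U Q\trans B\trans & UDU\trans}.
\]
Since $UDU\trans\in S(H)$, the $H$-block has the correct off-diagonal zero/nonzero pattern; the block $M_{11}$ is unchanged and equals $S$-pattern of $G[V_1]$; and the coupling block $BQU\trans$ has its $i$-th row nonzero exactly when the $i$-th row of $B$ is nonzero, i.e.\ exactly for $i\in X$, and within such a row the choice of $\mc Y$ forces every entry to be nonzero, which is precisely the edge pattern of the partial join $(G[V_1],X)\vee H$. Hence $N\in S(G')$. Finally, $N$ is orthogonally similar to $\npmatrix{M_{11} & BQ \\ Q\trans B\trans & D}$ via $I_{|V_1|}\oplus U$, and that matrix is in turn similar (via $I_{|V_1|}\oplus(Q\oplus I_{|H|-m})$ after padding $M$ with the $\sigma'$-part) to $M\oplus\diag(\sigma')$, so $\sigma(N)=\sigma(M)\cup\sigma'$.

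I expect the main obstacle to be the bookkeeping that makes the similarity argument precise — in particular, matching up the ordering of eigenvalues so that the $D_2$-part of $D$ sits in the same coordinates that $Q$ produces, and checking that padding $M$ by a diagonal block carrying $\sigma'$ really does yield a matrix orthogonally similar to $N$. The conceptual content is entirely in the application of generic realisability with $\mc Y$ equal to the rows of the coupling block; once that is set up, verifying $N\in S(G')$ is a direct comparison of zero patterns, and the spectral claim is a chain of orthogonal conjugations. One should also note explicitly that $\mc Y$ is a finite subset of $\R^{|H|}\setminus\{\0\}$ (after zero-padding the length-$m$ rows of $Q\trans B\trans$ to length $|H|$, or rather viewing them as the relevant coordinates), so that the definition of generic realisability applies; and that the entries of $M_{11}$ and the diagonal of $UDU\trans$ are unconstrained, which is why no SSP hypothesis is needed here.
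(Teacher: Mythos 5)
Your construction is essentially the paper's own proof: block-decompose $M$, diagonalise $M[V_2]$ by $Q$, form $M\oplus\diag(\sigma')$, and conjugate by $I\oplus\bigl(U(Q\trans\oplus I)\bigr)$ where $U$ comes from applying generic realisability of $\sigma(M[V_2])\cup\sigma'$ for $H$ to the zero-padded nonzero columns of $Q\trans B\trans$ (those indexed by $X$), which forces the coupling block to be nowhere-zero exactly on the rows of $X$. The argument is correct; the only cosmetic gap is that $BQU\trans$ should literally be written with the zero-padding $\npmatrix{BQ & 0}U\trans$, which you acknowledge at the end and which the paper makes explicit.
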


\begin{proof} Let us denote $m_i:=|V_i|$ for $i=1,2$ and $k:=|X|$. Assume without loss of generality that $V(G)=[m_1+m_2]$, $V_1=[m_1]$ and $X=m_1-k+[k]$.
We have $M=\left(
\begin{array}{cc}
 A & B \\
 B\trans & C \\
\end{array}
\right) \in S(G)$, with $A=M[V_1]$, $C=M[V_2]$, and $\sigma(C)=\{\lambda_1,\ldots,\lambda_{m_2}\}$.  Let $Q\in \bR^{m_2\times m_2}$ be an orthogonal matrix, such that $Q\trans CQ=\Lambda_0:=\diag(\lambda_1,\ldots,\lambda_{m_2})$.  Write $\sigma'=\{\mu_1,\ldots,\mu_t\}$, and let $\Lambda':=\diag(\mu_1,\ldots,\mu_t)$. Moreover, let $N'=M \oplus \Lambda'$ with eigenvalues $\sigma(M) \cup \sigma'$.

Since $X=m_1-k+[k]$ is the vertex boundary of $V_2$ in $G$, we have $B\trans=\npmatrix{ 0_{m_2 \times (m_1-k)}&B_0\trans } \in \bR^{m_2 \times m_1}$, where no column of $B_0\trans \in \bR^{m_2 \times k}$ is zero. Let us denote the set of columns of $Q\trans B_0\trans$ by $\mathcal X \subset \R^{m_2}$, and let $\mathcal Y \subset \R^{m_2+t}$ denote the set of vectors obtained from elements of $\mathcal X$ by appending $t$ zeros. Then $|{\mathcal X}|=|{\mathcal Y}|=k$. Since $Q$ is orthogonal, all vectors in $\mathcal Y$ are nonzero.

Since $\sigma(M[V_2]) \cup \sigma'$ is generically realisable for $H$, there exist a matrix $C' \in S(H)$ with $\sigma(C')=\sigma(C)\cup \sigma'$, and an orthogonal matrix $U\in \bR^{(m_2+t)\times (m_2+t)}$, such that $U^TC'U=\Lambda_0 \oplus \Lambda'$ and  $U\yy$ is a nowhere-zero vector for all $\yy \in \mc Y$.

 Let 
 $$U':=I_{m_1} \oplus \left( U (Q\trans \oplus I_{t})\right) \text{ and }
N:=U' N'  {U'}\trans=\npmatrix{
A & Z \\
 Z\trans & C' },$$
where \[Z\trans=\npmatrix{0_{(m_2+t)\times (m_1-k)}&U\npmatrix{ Q\trans B_0\trans\\ 0_{t \times k}}}.\] Since $U\yy$ is a nowhere-zero vector for $\yy \in {\mathcal Y}$, the matrix $U\npmatrix{ Q\trans B_0\trans\\ 0_{t \times k}}$ is nowhere-zero. Hence, $N \in S(G')$, and $\sigma(N)=\sigma(N')=\sigma(M)\cup \sigma'$ as desired. 
\end{proof}

Lemma \ref{lem:HisGR} can be used to build explicit examples or to provide more general results. Implementation is faced with two issues: first we need some information on $\sigma(M[V_2])$, and second, we need to prove generic realisability of $\sigma(M[V_2]) \cup \sigma'$ for $H$. In our first application, we rely on Theorem \ref{01GR} for generic realisably. 

\begin{theorem}\label{thm:partial join}
  Let $G$ be a graph, $V(G)=V_1\cup V_2$ a partition of $V(G)$, and $X\subseteq V_1$ be the vertex boundary of $V_2$ in~$G$. Suppose $H$ is any connected graph with $|H|\ge |V_2|$ and consider the graph $G':=(G[V_1],X)\vee H$. %

  If there exists $M \in S(G)$ so that  $M[V_2]$ has distinct eigenvalues $\lambda_1,\ldots,\lambda_{|V_2|}$, then there exists $N \in S(G')$ with spectrum 
  $$\sigma(N)=\sigma(M) \cup \{\mu_1,\ldots,\mu_t\},$$ where  $t=|H|-|V_2|$
 and $\mu_1,\ldots,\mu_t$ are any distinct real numbers satisfying $\mu_i \ne \lambda_j$ for all $i \in [t], j\in [|V_2|]$. 
\end{theorem}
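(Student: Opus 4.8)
The plan is to deduce this immediately from Lemma~\ref{lem:HisGR} together with Theorem~\ref{01GR}. Set $\sigma':=\{\mu_1,\ldots,\mu_t\}$ with $t=|H|-|V_2|\ge 0$. By hypothesis the $\lambda_j$ are distinct, the $\mu_i$ are distinct, and $\mu_i\ne\lambda_j$ for all $i\in[t]$, $j\in[|V_2|]$, so the multiset $\sigma(M[V_2])\cup\sigma'=\{\lambda_1,\ldots,\lambda_{|V_2|},\mu_1,\ldots,\mu_t\}$ consists of exactly $|H|$ distinct real numbers, i.e.\ it realises the all-ones multiplicity list for $H$.

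The one thing to check is that this multiset is generically realisable for $H$. Since $H$ is connected, a generic matrix in $S(H)$ has simple spectrum (this is also a special case of Theorem~\ref{thm:nowhere0eigenbasis}), so the all-ones vector $\1$ of length $|H|$ is an ordered multiplicity list realised by a matrix in $S(H)$; it is a $0$-$1$ vector whose (single) column sum equals $|H|=|V(H)|$, hence $\1$ is a $0$-$1$ multiplicity matrix that fits $H$. By Theorem~\ref{01GR}, $\1$ is therefore generically realisable for $H$. Consequently the assignment of the $|H|$ distinct eigenvalues $\lambda_1,\ldots,\lambda_{|V_2|},\mu_1,\ldots,\mu_t$ to the entries of $\1$ yields a generically realisable multiset for $H$; that is, $\sigma(M[V_2])\cup\sigma'$ is generically realisable for $H$.

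Finally I would invoke Lemma~\ref{lem:HisGR} with this $M$ and this $\sigma'$: the hypothesis $|H|\ge|V_2|$ holds by assumption, and the hypothesis that $\sigma(M[V_2])\cup\sigma'$ is generically realisable for $H$ has just been verified, so the lemma produces $N\in S(G')$ with $\sigma(N)=\sigma(M)\cup\sigma'=\sigma(M)\cup\{\mu_1,\ldots,\mu_t\}$, which is precisely the claim.

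I do not expect a genuine obstacle here: the proof is just the observation that the spectrum we wish to impose on $H$ has all multiplicities equal to one, so the generic realisability that must be fed into Lemma~\ref{lem:HisGR} is exactly what Theorem~\ref{01GR} supplies. The only point requiring a moment's care is the bookkeeping that $(1,\ldots,1)$ genuinely counts as a multiplicity matrix fitting $H$, which rests on the existence of a matrix in $S(H)$ with distinct eigenvalues.
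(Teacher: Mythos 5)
Your proposal is correct and matches the paper's intended argument: the paper states this theorem without a separate proof, precisely because it follows from Lemma~\ref{lem:HisGR} once one observes that $\sigma(M[V_2])\cup\{\mu_1,\dots,\mu_t\}$ consists of $|H|$ distinct reals, so the all-ones list (a $0$-$1$ multiplicity matrix fitting the connected graph $H$) is generically realisable by Theorem~\ref{01GR}. Your bookkeeping about $(1,\dots,1)$ being a multiplicity matrix for $H$ is exactly the point the paper relies on, so there is nothing to add.
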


Theorem~\ref{thm:partial join} allows us, in some sense, to replace a submatrix with distinct eigenvalues with a matrix corresponding to an arbitrary connected graph of equal or bigger size. Note, the technical requirement that $M[V_2]$ has distinct eigenvalues is automatically satisfied when $G[V_2]$ is a path. This observation allows us to improve an upper bound for the number of distinct eigenvalues of  joins of graphs that is given in \cite[Section~4.2]{MR3904092}, where it is proved that for connected graphs $G$ and $H$, $q(G \vee H)$ is bounded above by $2+\bigabs{|G|-|H|}$. 

\begin{corollary}\label{cor:join}
  For any $n\in \bN_0$ and a %
  graph $H$ with $|H|\geq n$ we have
$$q(G \vee H) \leq q(G \vee P_n)+|H|-n.$$

In particular, if $G$ and $H$ are both connected, then
$$q(G \vee H) \leq \max\left\{2,\bigabs{|G|-|H|}\right\}.
$$
\end{corollary}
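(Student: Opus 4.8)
The plan is to derive both inequalities directly from Theorem~\ref{thm:partial join}. For the first inequality, I would apply the theorem with a cleverly chosen matrix $M$. Specifically, take $H'=P_n$ and let $G'':=G\vee P_n$ in the role of the ``ambient'' graph; set $V_2$ to be the vertex set of the $P_n$ part and $V_1$ the vertex set of $G$. Then the vertex boundary $X$ of $V_2$ in $G''$ is all of $V_1$, since in a join every vertex of $G$ is adjacent to every vertex of $P_n$. Consequently the graph $G':=(G''[V_1],X)\vee H = (G,V(G))\vee H = G\vee H$, which is exactly the target. Now pick $M\in S(G\vee P_n)$ achieving the minimum $q(M)=q(G\vee P_n)$; the submatrix $M[V_2]=M[P_n]$ lies in $S(P_n)$, and since the maximum multiplicity of a path is one, $M[V_2]$ automatically has $n$ distinct eigenvalues $\lambda_1,\dots,\lambda_n$. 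Theorem~\ref{thm:partial join} then produces $N\in S(G\vee H)$ with $\sigma(N)=\sigma(M)\cup\{\mu_1,\dots,\mu_t\}$ where $t=|H|-n$ and the $\mu_i$ are arbitrary distinct reals avoiding the $\lambda_j$. Hence $q(G\vee H)\le q(N)\le q(M)+t = q(G\vee P_n)+|H|-n$, giving the first displayed bound.

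For the ``in particular'' clause, assume $G$ and $H$ are both connected. Without loss of generality $|H|\ge |G|$ (the statement is symmetric in $G$ and $H$, and $q(G\vee P_n)=q(P_n\vee G)$). Set $n:=|G|$, so $n\le |H|$ and the first inequality applies, giving $q(G\vee H)\le q(G\vee P_n)+|H|-n = q(G\vee P_{|G|})+(|H|-|G|)$. Now $G\vee P_{|G|}$ is a join of two connected graphs of equal order $|G|$, so by the Monfared--Shader result $q(G\vee P_{|G|})=2$. Therefore $q(G\vee H)\le 2+(|H|-|G|) = 2+\bigabs{|G|-|H|}$. Combined with the trivial bound $q(G\vee H)\ge 2$ (a join of graphs each with at least one vertex is connected and has at least two vertices, hence is not realisable by a scalar matrix), and noting that when $\bigabs{|G|-|H|}\le 2$ Theorem~\ref{thm:diff=2} already gives $q(G\vee H)=2$, we obtain $q(G\vee H)\le \max\{2,\bigabs{|G|-|H|}\}$ in all cases.

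The only subtlety I anticipate is making sure the hypotheses of Theorem~\ref{thm:partial join} are met cleanly: one must check that $X$, the vertex boundary of $V_2=V(P_n)$ inside $G\vee P_n$, really is the \emph{full} vertex set $V_1=V(G)$, so that $(G''[V_1],X)\vee H$ collapses to the ordinary join $G\vee H$ rather than a proper partial join; this is immediate from the definition of the join but deserves an explicit sentence. One also needs $|H|\ge n$ for $t=|H|-n\ge 0$, which is exactly the standing hypothesis. No genuine obstacle arises — the work has all been done in Theorem~\ref{thm:partial join} and in the cited Monfared--Shader theorem; the corollary is a matter of correctly instantiating the partial-join framework with $P_n$ in the ``removable'' slot.
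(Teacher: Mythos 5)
Your proof of the first inequality is fine and is exactly the intended instantiation of Theorem~\ref{thm:partial join}: take the ambient graph $G\vee P_n$, let $V_2=V(P_n)$, note that the vertex boundary of $V_2$ is all of $V(G)$ so the partial join collapses to $G\vee H$, and use the fact that $M[V_2]\in S(P_n)$ automatically has simple spectrum. This is the same (one-line) argument the paper gives for the first part.

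The ``in particular'' clause, however, has a genuine gap. With your choice $n=|G|$ and the Monfared--Shader equal-order result you obtain only
$q(G\vee H)\le 2+\bigabs{|G|-|H|}$, which is the previously known bound from the literature, not the claimed bound $\max\{2,\bigabs{|G|-|H|}\}$. When $\bigabs{|G|-|H|}=d\ge 3$ the target is $q(G\vee H)\le d$, and your estimate $2+d$ is strictly weaker; neither the trivial lower bound $q\ge 2$ nor the observation that Theorem~\ref{thm:diff=2} settles the case $d\le 2$ closes this difference, so the final sentence of your argument is a non sequitur for $d\ge 3$. The fix is to exploit the slack of $2$ in Theorem~\ref{thm:diff=2} rather than the equal-order case: assuming $|H|\ge|G|+3$, apply your first inequality with $n=|G|+2$ (legitimate since $|H|\ge n$). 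Theorem~\ref{thm:diff=2} gives $q(G\vee P_{|G|+2})=2$ because the orders differ by exactly $2$, and hence
$q(G\vee H)\le 2+|H|-(|G|+2)=|H|-|G|=\max\{2,\bigabs{|G|-|H|}\}$; together with the case $\bigabs{|G|-|H|}\le 2$ (Theorem~\ref{thm:diff=2} directly) and symmetry in $G$ and $H$, this completes the proof, and it is precisely the route the paper takes.
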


\begin{proof}
 The first part follows easily from Theorem~\ref{thm:partial join}. %
 Assume now that both $G$ and $H$ are connected. If $\bigabs{|H|-  |G|}\leq 2$, then $q(G \vee H)=2$ by Theorem~\ref{thm:diff=2}. To cover the case $|H|\geq |G|+3$, we note that Theorem~\ref{thm:diff=2} implies $q(G \vee P_{|G|+2})=2$ and hence we have $q(G \vee H) \leq 2+|H|-(|G|+2)=|H|-|G|$. By symmetry the statement follows.
\end{proof}

Next we explore applications of Lemma \ref{lem:HisGR} in the special case where $G$ is taken to be a cycle. Cycles are a nice family to use for this purpose, since every induced connected subgraph of a cycle is a path, and the IEP-G for cycles is known to have the following solution.

\begin{lemma}\cite[Theorem~3.3]{MR2549049}\label{lemma:IEPGcycles}
 Let $\lambda_1\leq \lambda_2\leq \ldots \leq \lambda_n$ be a list of real numbers, $n\geq 3$. Then $\{\lambda_1, \ldots, \lambda_n\}$ is the set of eigenvalues of a matrix in $S(C_n)$ if and only if 
 $$\lambda_1\leq \lambda_2 < \lambda_3\leq \lambda_4<\ldots %
 \; \text{ or }\;
 \lambda_1< \lambda_2 \leq \lambda_3< \lambda_4\leq\ldots %
 .$$
\end{lemma}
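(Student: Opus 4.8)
The plan is to reduce to periodic Jacobi matrices and treat the two directions separately: necessity by an elementary interlacing‐and‐oscillation argument, sufficiency by the inverse‑spectral construction for such matrices.

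\emph{Reduction.} Since $C_n$ is a single $n$-cycle, after relabelling the vertices consecutively around the cycle every $A\in S(C_n)$ satisfies $a(i,i+1)=a(i+1,i)=b_i\ne0$ for $1\le i\le n-1$, $a(1,n)=a(n,1)=b_n\ne0$, and all other off-diagonal entries vanish; conversely every such ``periodic Jacobi'' matrix lies in $S(C_n)$. It therefore suffices to describe their spectra. First, every eigenvalue of $A$ has multiplicity at most $2$: deleting any vertex yields a matrix in $S(P_{n-1})$, which has simple spectrum, so a multiplicity‑$3$ eigenvalue of $A$ would contradict Cauchy interlacing. (When $n=3$ we have $C_3=K_3$ and the stated condition merely forbids a triple eigenvalue, which is clear; so assume $n\ge4$.)

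\emph{Necessity.} I would show that all ``doubling positions'' in the sorted list $\lambda_1\le\cdots\le\lambda_n$ share one parity; combined with multiplicity $\le2$ this is exactly the zig‑zag condition. Let $\lambda=\lambda_i=\lambda_{i+1}$ be a double eigenvalue. Its eigenspace is $2$-dimensional, so we may choose an eigenvector $\vv=(v_1,\dots,v_n)\trans\ne\0$ of $A$ with $v_1=0$. Then $(v_2,\dots,v_n)\trans$ is a non‑zero eigenvector of the path matrix $A(1)\in S(P_{n-1})$ for $\lambda$, and Cauchy interlacing (together with multiplicity $\le2$) forces $\lambda$ to be the $i$-th smallest eigenvalue of $A(1)$. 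By Sturm's oscillation theorem for Jacobi matrices, this eigenvector has all entries non‑zero and $\operatorname{sign}(v_2v_n)=\varepsilon(-1)^{i-1}$, where $\varepsilon\in\{\pm1\}$ depends only on the signs of the off-diagonal entries of $A(1)$, hence only on $A$. On the other hand, the first row of $A\vv=\lambda\vv$ reads $a(1,2)v_2+a(1,n)v_n=0$, so $\operatorname{sign}(v_2v_n)=-\operatorname{sign}\bigl(a(1,2)\,a(1,n)\bigr)$, again fixed by $A$. Comparing the two expressions, $(-1)^{i-1}$ is determined by $A$ alone, so every double eigenvalue of $A$ sits at a position of the same parity: Pattern~1 if that parity is odd, Pattern~2 if it is even.

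\emph{Sufficiency.} Given a list obeying the zig‑zag condition, I would build a periodic Jacobi matrix with that spectrum by reversing the above. Choose auxiliary interlacing data --- a strictly increasing list $\nu_1<\cdots<\nu_{n-1}$ to play the role of the spectrum of $A(1)$, required to equal $\lambda_i$ at each doubling position $i$ and chosen strictly between consecutive distinct $\lambda_j$'s elsewhere (the zig‑zag inequalities are exactly what make such a choice possible), together with a set of norming constants and a sign for the corner entry --- and then run the three‑term Lanczos‑type recurrence that reconstructs the entries $a_i,b_i$ from this data. The main obstacle, as in the classical Jacobi and periodic‑Jacobi inverse problems, will be to verify that \emph{every} reconstructed off-diagonal entry $b_1,\dots,b_n$ is non‑zero: each $b_i^2$ (and $b_n^2$) arises as a ratio of products of differences of the prescribed numbers, and one must check that the zig‑zag inequalities make each of these strictly positive. (One cannot sidestep sufficiency by perturbing a direct sum of path matrices supported on a spanning subgraph $P_a\cup P_b\subseteq C_n$ and invoking Theorem~\ref{thm:ssp}, since such a direct sum has the SSP only when the two path spectra are disjoint, and it is precisely the lists with repeated eigenvalues that this approach fails to reach.)
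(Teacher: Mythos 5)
The paper does not prove this lemma at all: it is quoted directly from the cited reference (Theorem~3.3 there; this is the cycle case of the Ferguson-type theory of periodic Jacobi matrices), so any proof you offer has to stand on its own. Your \emph{necessity} argument does: the reduction to periodic Jacobi matrices, the bound of multiplicity $\le 2$ via interlacing with $A(1)\in S(P_{n-1})$, and the parity argument using an eigenvector $\vv$ with $v_1=0$ are all sound. One small inaccuracy: Sturm oscillation does not give that the eigenvector of $A(1)$ has \emph{all} entries nonzero (interior entries of a Jacobi eigenvector may vanish); what it does give, and all you actually use, is that the endpoint entries $v_2,v_n$ are nonzero and $\operatorname{sign}(v_2v_n)=\varepsilon(-1)^{i-1}$. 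Combined with $b_1v_2+b_nv_n=0$ from the first row, your conclusion that all doubling positions share one parity, which together with multiplicity $\le2$ is equivalent to the two displayed patterns, is correct.

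The genuine gap is the \emph{sufficiency} direction, which is the substance of the cited theorem and which you only outline. You propose to pick an auxiliary strictly increasing spectrum $\nu_1<\dots<\nu_{n-1}$ equal to $\lambda$ at each doubling position, together with norming constants and a corner sign, and to run an inverse periodic-Jacobi reconstruction; but you explicitly defer the decisive verification that every reconstructed off-diagonal entry $b_1,\dots,b_n$ is nonzero (equivalently, that the relevant squared quantities are strictly positive). That is exactly where the zig-zag hypothesis must be used and where Ferguson's construction and the cited proof do their work. Moreover, in the periodic case the norming data cannot be chosen freely: there are compatibility constraints whose solvability must itself be proved, and the usual two-spectra reconstruction degenerates precisely at the doubling positions, where the interlacing is non-strict. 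As written, the ``if'' direction is a plan rather than a proof, so the argument is incomplete. (Your parenthetical remark is correct, though: one cannot reach the lists with repeated eigenvalues by an SSP perturbation of a spanning $P_a\cup P_b\subseteq C_n$, since a direct sum whose summands share an eigenvalue never has the SSP.)
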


\begin{example}
  Fix $n\in \bN$ and let $X$ be the set of two degree one vertices of $P_n$. Given a connected graph $H$, consider $J:=(P_n,X)\vee H$. We claim that if $s\in \bN_0$ and $2s\leq|J|= n+|H|$ then $\{2^{(s)},1^{(|J|-2s)}\}$ is an unordered multiplicity list for some matrix in $S(J)$. Let $G=C_{n+|H|}$. By Lemma~\ref{lemma:IEPGcycles}, there is a matrix in $S(G)$ with unordered multiplicity list $\{2^{(s)},1^{(|J|-2s)}\}$. Partition the vertices of $G$ as $V_1\cup V_2$ so that $P_n=G[V_1]$ and $P_{|H|}=G[V_2]$, and apply Theorem~\ref{thm:partial join}.

\begin{figure}[h]
\begin{center}
\begin{tikzpicture}
\draw (3,0) ellipse (1cm and 2cm);
\node[rectangle, draw=none, right, align=left] at (3,0) {$H$}; 

\node[fill=gray]  (6)  at (1,1) {}; 
\node[fill=gray]  (0)  at (1,-1) {}; 

\foreach \x in {-1.5,-1,-0.5,0,0.5,1,1.5}{
            \draw (3,\x)--(0);
            \draw (3,\x)--(6);
    }

\node (1)  at (0,-1) {}; 
\node (2)  at (-1,-1) {}; 
\node (3)  at (-2,0) {}; 
\node (4)  at (-1,1) {}; 
\node(5)  at (0,1) {}; 

       \foreach \x in {1,2,3,4,5}{
                    \node at (\x) { };
  } 
  \draw (0) -- (1) -- (2) --(3)--(4)--(5)--(6);

\end{tikzpicture}
\end{center}
\caption{Sketch of the partial join $(P_7,X) \vee H$, where pendant vertices $X$ of $P_7$ are coloured grey. The edges of $H$ are not drawn.}
\label{fig:cycle}
\end{figure}
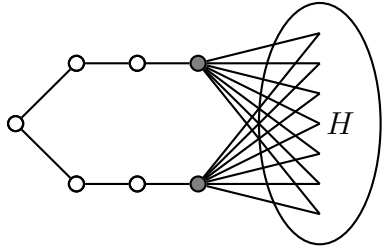
\end{example}

Note that Lemma \ref{lem:HisGR} allows us to increase the multiplicites of  eigenvalues of $M$, provided the technical conditions of the lemma are satisfied. For example, the eigenvalues $\{\mu_1, \ldots, \mu_t\}$ that we add in Lemma~\ref{lem:HisGR} can be chosen to agree with eigenvalues in $\sigma(M)$, hence increasing their multiplicity,  provided they are not also eigenvalues of $M[V_2]$. When this condition is not satisfied, Theorem \ref{01GR} cannot be applied to assure generic realisability. Examples of both eventualities are given in the two examples below. In the first example we exhibit ordered multiplicity lists that achieve $q(G)$, where $G$ is a wheel graph of order $4m-1$.

\begin{example}\label{ex:K1veeH}
 Let us apply Lemma \ref{lem:HisGR} to $G=C_{2m}$, $X=V_1=\{2m\}$ and $V_2=[2m-1]$. Choose any distinct real numbers $\lambda_1,\ldots,\lambda_{m}$. By  Lemma \ref{lemma:IEPGcycles}     there exists a matrix $M \in S(C_{2m})$ with  $$\sigma(M)=\{\lambda_1^{(2)},\lambda_2^{(2)},\ldots,\lambda_{m}^{(2)}\}.$$
 Let $B =M(2m)\in S(P_{2m-1})$ be the leading principal $(2m-1) \times (2m-1)$ submatrix of $M$. By interlacing and~\cite{MR0244285} we have $$\sigma(B)=\{\lambda_1,\mu_1,\lambda_2,\mu_2,\ldots,\mu_{m-1},\lambda_{m}\},$$ $\mu_i \in (\lambda_i,\lambda_{i+1})$ for $i\in [m-1]$.

 Let $H=C_{4m-2}$ and let $\sigma'=\sigma(B)$. By Lemma \ref{lemma:IEPGcycles} there exists $A \in S(C_{4m-2})$ with  $\sigma(A)=\sigma(B) \cup \sigma'=\{\lambda_1^{(2)},\mu_1^{(2)},\lambda_2^{(2)},\mu_2^{(2)},\ldots,\mu_{m-1}^{(2)},\lambda_{m}^{(2)}\}$. Let us prove that we can choose a nowhere-zero eigenbasis for $A$. Every eigenspace ${\mathcal V}$ of $A$ is spanned by two linearly independent vectors $\vv=\npmatrix{v_i}%
 $ and $\ww=\npmatrix{w_i}%
 $.   If  $v_i=w_i=0$ for some $i\in [4m-2]$, then $\vv(i)$ and $\ww(i)$ are linearly independent eigenvectors of $A(i)\in S(P_{4m-3})$ corresponding to the same eigenvalue, which contradicts the fact every matrix corresponding to a path has simple eigenvalues, \cite{MR0244285}.  Hence for each $i$ either $v_i \ne 0$ or $w_i\ne 0$, so we can choose their linear combinations to be nowhere-zero and hence we can choose  a nowhere-zero eigenbasis for $A$. 
 
 Since $A$ has a nowhere-zero eigenbasis and no simple eigenvalues, the proof of \cite[Proposition~3.4]{levene2020orthogonal} shows that  $\sigma(A)=\sigma(B) \cup \sigma'$ is generically realisable for $C_{4m-2}$.  By Lemma~\ref{lem:HisGR}, there exists a matrix $N \in S(K_1\vee C_{4m-2})$ with spectrum 
  $$\sigma(M)\cup \sigma'=\{\lambda_1^{(3)},\mu_1,\lambda_2^{(3)},\mu_2,\ldots,\mu_{m-1},\lambda_{m}^{(3)}\} .$$
  Hence the ordered multiplicity list $(3,1,3,1,\ldots,3)$ is realisable for $K_1 \vee C_{4m-2}$, which are also known as wheel graphs. 
  
  Note that Lemma \ref{lemma:IEPGcycles} prohibits odd number of eigenvalues between any two double eigenvalues of a matrix corresponding to a cycle. Hence by interlacing a matrix corresponding to $K_1 \vee C_{4m-2}$ cannot have an even number (including zero) of eigenvalues between any two triple eigenvalues. Therefore with the above construction we have found a matrix corresponding to $K_1 \vee C_{4m-2}$ with the maximal multiplicity of an eigenvalue and the minimal number of distinct eigenvalues. In particular, $q(K_1 \vee C_{4m-2})=2m-1$.
\end{example}

\begin{example}
  If $m\ge2$ and $H$ is any connected graph with $3m-2$ vertices, then the multiplicity list $\{3^{(m)}\}$ is spectrally arbitrary for $K_2\vee H$ and, in particular, $q(K_2\vee H)\le m$. To see this, 
  take $G=C_{2m}$, $X=V_1=\{2m,2m-1\}$ and $V_2=[2m-2]$.
For any $\lambda_1<\dots<\lambda_m$, let $M \in S(C_{2m})$ with  $\sigma(M)=\{\lambda_1^{(2)},\lambda_2^{(2)},\ldots,\lambda_{m}^{(2)}\}$. Then 
$\sigma(M(2m))=\{\lambda_1,\mu_1,\lambda_2,\mu_2,\ldots,\mu_{m-1},\lambda_{m}\}$, where $\mu_i \in (\lambda_i,\lambda_{i+1})$ for $i\in [m-1]$ (since a path has only simple eigenvalues).
Now, $M[V_2]$ is a principal submatrix of $M(2m)$, so by~\cite[Problem 4.3.P17]{MR2978290}, the eigenvalues of $M[V_2]$ strictly interlace those of $M(2m)$. In particular, the eigenvalues of $M[V_2]$ do not intersect $\sigma':=\{\lambda_1, \ldots,\lambda_m\}$. %
Applying Theorem \ref{thm:partial join} we see that the spectrum $\sigma(M)\cup \sigma'=\{\lambda_1^{(3)},\lambda_2^{(3)},\ldots,\lambda_{m}^{(3)}\}$ is realised by a matrix in $S(K_2 \vee  H)$, as required.

\end{example}

\begin{remark}
  In particular, by interlacing, the maximal multiplicity of an eigenvalue of a matrix correspoding to $P_n \vee K_2$ is equal to $3$ and hence $q(P_{3m-2} \vee K_2)=m$. This result complements \cite[Example~4.5]{MR3904092}, where they proved that $q(P_{s} \vee K_1)=\lfloor\frac{s+1}{2}\rfloor$. It would be interesting to determine $q(P_{s} \vee K_t)$ in general.
\end{remark}

In Theorem \ref{thm:partial join} we can make use of graphs for which IEP-G is solved (or better understood) to construct new realisable multiplicity lists for partial joins; we illustrate this idea also for generalised stars \cite{MR2022294}.

\begin{example}
Let $k \in\bN$  and  $H$  an arbitrary connected graph. If  ${\underline m}=\{m_1,m_2,\ldots,m_\ell,1^{(n)}\}$, such that $m_i \geq 2$, $\ell<n$, and $\sum_{i\in [\ell]} m_i=|H|-n+k+1\leq k+\ell$, then ${\underline m}$ is an unordered multiplicity list for $K_1 \vee (k K_1 \cup H)$.

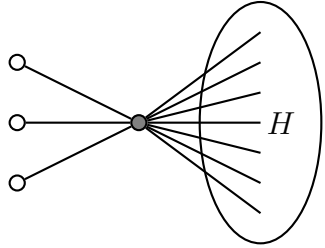
\begin{figure}[h]
\begin{center}
\begin{tikzpicture}
\draw (3,0) ellipse (1cm and 2cm);
\node[rectangle, draw=none, right, align=left] at (3,0) {$H$}; 

\node[fill=gray]  (0)  at (1,0) {}; 

\node (1)  at (-1,0) {}; 
\node (2)  at (-1,1) {}; 
\node (3)  at (-1,-1) {}; 

\foreach \x in {-1.5,-1,-0.5,0,0.5,1,1.5}{
            \draw (3,\x)--(0);
    }

\foreach \y in {1,2,3}{
            \draw (\y)--(0);
    }

\end{tikzpicture}
\end{center}
\caption{Sketch of the partial join $K_1 \vee (3K_1 \cup H)$, where the high degree vertex $v$ is coloured grey. The edges of $H$ are not drawn.}
\label{fig:star graph}
\end{figure}

To prove this, let $G=GS_{1,\ldots,1,|H|}$ be a generalized star with $k$ arm lengths equal to $1$ and one arm length equal to $|H|$, $G[V_1]=K_{1,k}$, $G[V_2]=P_{|H|}$, and let $v \in V(G)$ be the high degree vertex of $G$.  By \cite[Theorems~14 and 15]{MR2022294} any matrix in $S(G)$ has the unordered multiplicity list $\underline m$, such that $m_1,\ldots,m_\ell$ is majorized by $(k+1,1^{(|H|-1)})$, hence $\sum_{i\in [\ell]} m_i\leq k+\ell$. By Theorem~\ref{thm:partial join} there exists a matrix in $(K_{1,k},\{v\}) \vee H=K_1 \vee (k K_1 \cup H)$ with the same eigenvalues as a matrix in $S(G)$.
\end{example}

 \bibliographystyle{amsplain}
\bibliography{qqCref}

\end{document}